\newtheorem{thm}{Theorem}[section]
\newtheorem{conj}[thm]{Conjecture}
\newtheorem{lem}[thm]{Lemma}
\newtheorem{propn}[thm]{Proposition}
\newcommand{\tv}{\tilde{V}}
\newcommand{\ty}{\tilde{Y}}
\newcommand{\bn}{\mathbb{N}}
\newcommand{\bp}{\mathbf{P}}
\newcommand{\be}{\mathbf{E}}
\newcommand{\br}{\mathbb{R}}
\newcommand{\hm}{\hat{m}}
\newcommand{\diatre}{{\rm diam}_{d_{\mathcal{T}_{ij}}}(\mathcal{T}_{ij})}
\begin{document}
\title{Spectral asymptotics for stable trees} \author{David Croydon\footnote{Dept of Statistics,
University of Warwick, Coventry, CV4 7AL, UK;
{d.a.croydon@warwick.ac.uk.}}\hspace{20pt}and\hspace{20pt}Ben
Hambly\footnote{Mathematical Institute, 24-29 St Giles', Oxford, OX1
3LB, UK; {hambly@maths.ox.ac.uk}.}\\
\tiny{\hspace{5pt}UNIVERSITY OF WARWICK\hspace{55pt}UNIVERSITY OF
OXFORD}} \maketitle

\begin{abstract}
We calculate the mean and almost-sure leading order behaviour of the high
frequency asymptotics of the eigenvalue counting function associated
with the natural Dirichlet form on $\alpha$-stable trees, which lead in
turn to short-time heat kernel asymptotics for these random structures.
In particular, the conclusions we obtain demonstrate that the spectral
dimension of an $\alpha$-stable tree is almost-surely equal to $2\alpha/(2\alpha-1)$,
matching that of certain related discrete models. We also show that the exponent
for the second term in the asymptotic expansion of the eigenvalue counting function is no greater than $1/(2\alpha-1)$. To prove our results,
we adapt a self-similar fractal argument previously applied to the continuum random tree,
replacing the decomposition of the continuum tree at the branch point of three suitably
chosen vertices with a recently developed spinal decomposition for $\alpha$-stable trees.
\end{abstract}

\section{Introduction}

This work contains a study of the spectral properties of the class of random real trees known as $\alpha$-stable trees,
$\alpha\in(1,2]$. Such objects are natural: arising as the scaling limits of conditioned Galton-Watson trees \cite{Aldous3},
\cite{Duqap}; admitting constructions in terms of Levy processes \cite{LeGallDuquesne2} and fragmentation processes \cite{hmg};
as well as having connections to continuous state branching process models \cite{LeGallDuquesne2}. In recent years, a
number of geometric properties of $\alpha$-stable trees have been studied, such as the Hausdorff dimension and measure function
\cite{LegallDuquesne}, \cite{dlegh}, \cite{hmg}, degree of branch points \cite{LegallDuquesne} and decompositions into subtrees
\cite{HPW}, \cite{Mier}, \cite{Mier1}. Here, our goal is to enhance this understanding of $\alpha$-stable trees by establishing
various analytical properties for them, including determining their spectral dimension, with the results we obtain extending
those known to hold for the continuum random tree \cite{hamcroy}, which corresponds to the case $\alpha=2$.

To allow us to state our main results, we will start by introducing some of the notation that will be used throughout the
article (precise definitions are postponed until Section \ref{stablesec}). First, fix $\alpha\in(1,2]$ and let
$\mathcal{T}=(\mathcal{T},d_\mathcal{T})$ represent the $\alpha$-stable tree $\mathcal{T}$ equipped with its natural
metric $d_\mathcal{T}$. For $\mathbf{P}$-a.e. realisation of $\mathcal{T}$, it is possible to define a canonical non-atomic
Borel probability measure, $\mu$ say, whose support is equal to $\mathcal{T}$, where $\mathbf{P}$ is the probability measure
on the probability space upon which all the random variables of the discussion are defined. As with other measured real trees,
by applying results of \cite{Kigamidendrite}, one can check that it is  possible to construct an associated Dirichlet form
on $L^2(\mathcal{T},\mu)$ as an electrical energy when we consider $(\mathcal{T},d_\mathcal{T})$ to be a resistance network,
$\mathbf{P}$-a.s. We will denote this form by $\mathcal{E}$ and its domain by $\mathcal{F}$. Our focus will be on the
asymptotic growth of the eigenvalues of the triple $(\mathcal{E},\mathcal{F},\mu)$, which are defined to be the numbers
$\lambda$ which satisfy
\[\mathcal{E}(f,g)=\lambda\int_\mathcal{T}fgd\mu,\hspace{20pt}\forall
g\in\mathcal{F},\]
for some non-trivial eigenfunction
$f\in\mathcal{F}$. The corresponding eigenvalue counting function,
$N$,  is obtained by setting
\begin{equation} \label{ecf}
N(\lambda):=\#\{\mbox{eigenvalues of
}(\mathcal{E},\mathcal{F}, \mu)\leq\lambda\}.
\end{equation}
Our conclusions for this function are presented in the following theorem, which describes the large $\lambda$ mean and
$\mathbf{P}$-a.s. behaviour of $N$. In the statement of the result, the notation $\mathbf{E}$ represents
the expectation under the probability measure $\mathbf{P}$. Note that the first order result for $\alpha=2$ was established previously
as \cite{hamcroy}, Theorem 2, and our proof is an adaptation of the argument followed there. In particular, in \cite{hamcroy}
the recursive self-similarity of the continuum random tree described in \cite{Aldous5} was used to enable renewal and branching process
techniques to be applied to deduce the results of interest. In this article, we proceed similarly by drawing recursive
self-similarity for $\alpha$-stable trees from a spinal decomposition proved in \cite{HPW}.

{\thm \label{mainthm} For each $\alpha\in(1,2]$ and $\varepsilon>0$, there exists a deterministic constant $C\in(0,\infty)$ such that the following statements hold.\\
(a) As $\lambda\rightarrow\infty$,
\[\mathbf{E}N(\lambda)= C\lambda^{\frac{\alpha}{2\alpha-1}}+O\left(\lambda^{\frac{1}{2\alpha-1}+\varepsilon}\right).\]
(b) $\mathbf{P}$-a.s., as $\lambda\rightarrow\infty$,
\[N(\lambda)\sim C\lambda^{\frac{\alpha}{2\alpha-1}}.\]
Moreover, in $\mathbf{P}$-probability, the second order estimate of part (a) also holds.}

{\rem\label{mainrem} In the special case when $\alpha=2$, the estimate of the second order term can be improved to $O(1)$
in part (a) of the above theorem. A similar comment also applies to Corollaries \ref{cor1}(a) and \ref{cor2} below.}
\bigskip

For a bounded domain $\Omega\subseteq \mathbb{R}^n$, Weyl's Theorem establishes for the Dirichlet or Neumann Laplacian eigenvalue counting function the limit
\[ \lim_{\lambda\rightarrow\infty}\frac{N(\lambda)}{\lambda^{n/2}} = c_n|\Omega|_n, \]
where $|\Omega|_n$ is the $n$-dimensional Lebesgue measure of $\Omega$ and $c_n$ is a dimension dependent constant.
As a result, in the literature on fractal sets, the limit, when it exists,
\[d_S=2\lim_{\lambda\rightarrow\infty}\frac{\ln N(\lambda)}{\ln\lambda}\]
is frequently referred to as the spectral dimension of a (Laplacian on a) set. In our setting, the previous theorem allows us to immediately  read off that an $\alpha$-stable
tree has $d_S=2\alpha/(2\alpha-1)$, $\mathbf{P}$-a.s., where the Laplacian considered here is that
associated with the Dirichlet form $(\mathcal{E},\mathcal{F})$ in the standard way.
As the Hausdorff dimension with respect to $d_\mathcal{T}$ of an
$\alpha$-stable tree $\mathcal{T}$ is $d_H=\alpha/(\alpha-1)$ (see \cite{LegallDuquesne}, \cite{hmg}), it follows that
$d_S=2d_H/(d_H+1)$, thus confirming that $\alpha$-stable trees satisfy an equality between the analytically defined $d_S$
and geometrically defined $d_H$ that has likewise been proved for various other finitely ramified random fractals when the
Hausdorff dimension is measured with respect to an intrinsic resistance metric (which is identical to $d_\mathcal{T}$ in
the $\alpha$-stable tree case), see \cite{Hamasymp}, \cite{Kigami} for example. Furthermore, it is worth remarking that
$2\alpha/(2\alpha-1)$ is also the spectral dimension of the random walk on a Galton-Watson tree whose offspring distribution
lies in the domain of attraction of a stable law with index $\alpha$, conditioned to survive \cite{CK}. This final observation
could well have been expected given the convergence result proved in \cite{DAC} that links the random walks on a related family
of Galton-Watson trees conditioned to be large and the Markov process $X$ corresponding to $(\mathcal{E},\mathcal{F},\mu)$, which
can be interpreted as the Brownian motion on the $\alpha$-stable tree.

Of course we have shown much more than just the existence of the spectral dimension, as we have demonstrated the mean and $\mathbf{P}$-a.s. existence of
the Weyl limit (which does not exist for exactly self-similar fractals with a high degree of symmetry \cite{Kigami}).
In fact, for a compact manifold with smooth boundary (under a certain geometric condition), it was proved in \cite{Ivrii} that the asymptotic expansion of the  eigenvalue counting function of the Neumann Laplacian is given by
\[ N(\lambda) = c_n |\Omega| \lambda^{n/2} + \frac{1}{4}c_{n-1} |\partial \Omega|_{n-1} \lambda^{(n-1)/2} + o(\lambda^{(n-1)/2}). \]
Analogously, the result we establish here provides an estimate on the size of the second order term for $\alpha$-stable trees. If our expansion had the same structure as the classical result, in the case $\alpha=2$, for example, we would expect to see a constant second order term, as the natural boundary is finite. However, despite seeing this in mean, we do not have (or expect) an almost sure or in probability second term of this type. Indeed, although our results do not confirm that the second order exponent is equal to $1/(2\alpha-1)$, we anticipate that the randomness in the structure leads to fluctuations of this higher order.

As in \cite{hamcroy}, it is straightforward to transfer our conclusions regarding the leading order spectral asymptotics of $\alpha$-stable trees
to a result about the heat kernel $(p_t(x,y))_{x,y\in\mathcal{T}}$ for the Laplacian associated with $(\mathcal{E},\mathcal{F},\mu)$.
In particular, a simple application of an Abelian theorem yields the following asymptotics for the trace of the heat semigroup.

{\cor \label{cor1} If $\alpha\in(1,2]$, $\varepsilon>0$, $C$ is the constant of Theorem \ref{mainthm} and $\Gamma$ is the standard
gamma function, then the following statements hold.\\
(a) As $t\rightarrow 0$,
\[\mathbf{E}\int_\mathcal{T}p_t(x,x)\mu(dx)=
C\Gamma\left(\tfrac{3\alpha-1}{2\alpha-1}\right)t^{-\frac{\alpha}{2\alpha-1}}+O\left(t^{-\frac{1}{2\alpha-1}+\varepsilon}\right).\]
(b) $\mathbf{P}$-a.s., as $t\rightarrow 0$,
\[\int_\mathcal{T}p_t(x,x)\mu(dx)\sim C\Gamma\left(\tfrac{3\alpha-1}{2\alpha-1}\right)t^{-\frac{\alpha}{2\alpha-1}}.\]}

Finally, $\alpha$-stable trees are known to satisfy the same root invariance property as the continuum random tree. More specifically,
if we select a $\mu$-random vertex $\sigma\in\mathcal{T}$, then the tree $\mathcal{T}$ rooted at $\sigma$ has the same distribution
as the tree $\mathcal{T}$ rooted at its original root, $\rho$ say (see \cite{LegallDuquesne}, Proposition 4.8). This allows us to
transfer part (a) of the previous result to a limit for the annealed on-diagonal heat kernel at $\rho$ (cf. \cite{hamcroy},
Corollary 4).

{\cor \label{cor2} If $\alpha\in(1,2]$, $\varepsilon>0$, $C$ is the constant of Theorem \ref{mainthm} and $\Gamma$ is the standard gamma function, then, as $t\rightarrow\infty$,
\[ \mathbf{E}p_t(\rho,\rho)=
C\Gamma\left(\tfrac{3\alpha-1}{2\alpha-1}\right)t^{-\frac{\alpha}{2\alpha-1}}+O\left(t^{-\frac{1}{2\alpha-1}+\varepsilon}\right).\]}

The rest of the article is organised as follows. In Section \ref{stablesec} we describe some simple properties of Dirichlet forms on
compact real trees, and also introduce a spinal decomposition for $\alpha$-stable trees that will be applied recursively. In
Section \ref{spectralsec} we prove the mean spectral result stated in this section, via a direct renewal theorem proof. By making the changes to \cite{hamcroy} that were briefly described above, we then proceed to establishing the almost-sure first order eigenvalue asymptotics in Section \ref{spectralsecas} using a branching process argument. Finally, in Section \ref{secondordersec}, we further investigate the second order behaviour of the function $N(\lambda)$ as $\lambda\rightarrow\infty$.

\section{Dirichlet forms and recursive spinal decomposition}\label{stablesec}

Before describing the particular properties of $\alpha$-stable trees that will be of interest to us, we present a brief
introduction to Dirichlet forms on more general tree-like metric spaces. To this end, for the time being we suppose that
$\mathcal{T}=(\mathcal{T},d_\mathcal{T})$ is a deterministic compact real tree (see \cite{rrt}, Definition 1.1) and $\mu$ is
a non-atomic finite Borel measure on $\mathcal{T}$ of full support. These assumptions easily allow us to check the conditions
of \cite{Kigamidendrite}, Theorem 5.4, to deduce that there exists a unique local regular Dirichlet form $(\mathcal{E},\mathcal{F})$
on $L^2(\mathcal{T},\mu)$ associated with the metric $d_\mathcal{T}$ through, for every $x,y\in\mathcal{T}$,
\begin{equation}\label{tres}
d_\mathcal{T}(x,y)^{-1}=\inf\{\mathcal{E}(f,f):\:f\in\mathcal{F},\:f(x)=0,\:f(y)=1\}.
\end{equation}
Given the triple $(\mathcal{E},\mathcal{F},\mu)$, we define the corresponding eigenvalue counting function $N$ as at (\ref{ecf}).
Now, one of the defining features of a Dirichlet form is that, equipped with the norm $\|\cdot\|_{\mathcal{E},\mu}$ defined by
\begin{equation}\label{enorm}
\|f\|_{\mathcal{E},\mu}:=\left(\mathcal{E}(f,f)+\int_{\mathcal{T}}f^2d\mu\right)^{1/2},\hspace{20pt}\forall f\in\mathcal{F},
\end{equation}
the collection of functions $\mathcal{F}$ is a Hilbert space, and moreover, the characterisation of $(\mathcal{E},\mathcal{F})$
at (\ref{tres}) implies that the natural embedding from $(\mathcal{F},\|\cdot\|_{\mathcal{E},\mu})$ into $L^2(\mathcal{T},\mu)$
is compact (see \cite{Kigres}, Lemma 8.6, for example). By standard theory for self-adjoint operators, it follows that $N(\lambda)$
is zero for $\lambda<0$ and finite for $\lambda\geq 0$ (see \cite{Kigami}, Theorem B.1.13, for example). Furthermore, by applying
results of \cite{Kigami}, Section 2.3, one can deduce that $1\in\mathcal{F}$, and $\mathcal{E}(f,f)=0$ if and only if $f$ is
constant on $\mathcal{T}$. Thus $N(0)=1$. When we incorporate this fact into our argument in the next section it will be convenient
to have notation for the shifted eigenvalue counting function $\tilde{N}:\mathbb{R}\rightarrow\mathbb{R}_+$ defined by setting
$\tilde{N}(\lambda)=N(\lambda)-1$, which clearly satisfies $\tilde{N}(\lambda)=\#\{\mbox{eigenvalues of
}(\mathcal{E},\mathcal{F}, \mu)\in(0,\lambda]\}$ for $\lambda> 0$.

Later, it will also be useful to consider the Dirichlet eigenvalues of $(\mathcal{E},\mathcal{F},\mu)$ when the boundary
of $\mathcal{T}$ is assumed to consist of two distinguished vertices $\rho,\sigma\in\mathcal{T}$, $\rho\neq \sigma$. To define
these eigenvalues precisely, we first introduce the form $(\mathcal{E}^D,\mathcal{F}^D)$ by setting
$\mathcal{E}^D:=\mathcal{E}|_{\mathcal{F}^D\times\mathcal{F}^D}$, where
$\mathcal{F}^D:=\left\{f\in\mathcal{F}:f(\rho)=0=f(\sigma)\right\}$.
Since $\mu(\{\rho,\sigma\})=0$, \cite{FOT}, Theorem 4.4.3, implies that $(\mathcal{E}^D,\mathcal{F}^D)$ is a regular Dirichlet form
on $L^2(\mathcal{T},\mu)$. Furthermore, as it is the restriction of $(\mathcal{E},\mathcal{F})$, we can apply \cite{KigLap},
Corollary 4.7, to deduce that
\begin{equation}\label{bracket}
N^D(\lambda)\leq N(\lambda)\leq N^D(\lambda)+2,
\end{equation}
where $N^D$ is the eigenvalue counting function for $(\mathcal{E}^D,\mathcal{F}^D,\mu)$, and also, since $\mathcal{E}(f,f)=0$ if
and only if $f$ is a constant on $\mathcal{T}$, $N^D(0)=0$. The eigenvalues of the triple $(\mathcal{E}^D,\mathcal{F}^D,\mu)$
will also be called the Dirichlet eigenvalues of $(\mathcal{E},\mathcal{F},\mu)$ and $N^D$ the Dirichlet eigenvalue counting
function of $(\mathcal{E},\mathcal{F},\mu)$.

To conclude this general discussion of Dirichlet forms on compact real trees, we prove a lemma that provides a lower bound for
the first non-zero eigenvalue of $(\mathcal{E},\mathcal{F},\mu)$ and first eigenvalue of $(\mathcal{E}^D,\mathcal{F}^D,\mu)$,
which will be repeatedly applied in the subsequent section. In the statement of the result, ${\rm
diam}_{d_\mathcal{T}}(\mathcal{T}):=\sup_{x,y\in\mathcal{T}}d_{\mathcal{T}}(x,y)$ is the diameter of the real tree
$(\mathcal{T},d_\mathcal{T})$.

{\lem \label{firste} In the above setting, $N^D(\lambda)=\tilde{N}(\lambda)=0$ whenever
\[0\leq \lambda<\frac{1}{{\rm diam}_{d_\mathcal{T}}(\mathcal{T})\mu(\mathcal{T})}.\]}
\begin{proof} As in the proof of \cite{hamcroy}, Lemma 20, observe that if $f\in\mathcal{F}^D$ is an eigenfunction of
$(\mathcal{E}^D,\mathcal{F}^D,\mu)$ with eigenvalue $\lambda>0$, then (\ref{tres}) implies that, for $x\in\mathcal{T}$,
\[f(x)^2=(f(x)-f(\rho))^2\leq \mathcal{E}(f,f)d_\mathcal{T}(\rho,x)\leq \lambda {\rm diam}_{d_\mathcal{T}}(\mathcal{T})
\int_\mathcal{T}f^2d\mu.\]
Integrating out $x$ with respect to $\mu$ yields the result in the Dirichlet case.

Similarly, if $f\in\mathcal{F}$ is an eigenfunction of $(\mathcal{E},\mathcal{F},\mu)$ with eigenvalue $\lambda>0$, then, for
$x,y\in\mathcal{T}$,
\[(f(x)-f(y))^2\leq \lambda {\rm diam}_{d_\mathcal{T}}(\mathcal{T}) \int_\mathcal{T}f^2d\mu.\]
Since by the definition of an eigenfunction $\int_\mathcal{T}fd\mu=\lambda^{-1}\mathcal{E}(f,1)=0$, integrating out both $x$ and $y$
with respect to $\mu$ completes the proof.
\end{proof}

We now turn to $\alpha$-stable trees. To fix notation, as in the introduction we will henceforth assume that
$\mathcal{T}=(\mathcal{T},d_\mathcal{T})$ is an $\alpha$-stable tree, $\alpha\in(1,2]$, $\mu$ is the canonical Borel probability
measure on $\mathcal{T}$ and all the random variables we consider are defined on a probability space with probability measure
$\mathbf{P}$. Since $\alpha$-stable trees have been reasonably widely studied, we do not feel it essential to provide an explicit
construction of such objects, examples of which can be found in \cite{LeGallDuquesne2} and \cite{hmg}. Instead, we simply observe
that the results of \cite{LeGallDuquesne2} imply that $(\mathcal{T},\mu)$ satisfies all the properties for measured compact real
trees that were assumed at the start of this section, and therefore the above discussion applies to the Dirichlet forms
$(\mathcal{E},\mathcal{F})$, $(\mathcal{E}^D,\mathcal{F}^D)$, and eigenvalue counting functions $N$, $\tilde{N}$, $N^D$, associated
with the $\alpha$-stable tree $\mathcal{T}$, $\mathbf{P}$-a.s.

Fundamental to our proof of Theorem \ref{mainthm} is the fine spinal decomposition of $\mathcal{T}$ that was developed in \cite{HPW},
and which we now describe. First, suppose that there is a distinguished vertex $\rho\in\mathcal{T}$, which we call the root, and
choose a second vertex $\sigma\in\mathcal{T}$ randomly according to $\mu$. Note that, since $\mu$ is non-atomic, $\rho\neq
\sigma$, $\mathbf{P}$-a.s. Secondly, let $(\mathcal{T}_i^o)_{i\in\mathbb{N}}$ be the connected components of $\mathcal{T}\backslash
[[\rho,\sigma]]$, where $[[\rho,\sigma]]$ is the minimal arc connecting $\rho$ to $\sigma$ in $\mathcal{T}$. We assume that
$(\mathcal{T}_i^o)_{i\in\mathbb{N}}$ have been ordered so that the masses $\Delta_i:=\mu(\mathcal{T}_i^o)$, which $\mathbf{P}$-a.s.
take values in $(0,1)$ and sum to 1, are non-increasing in $i$. $\mathbf{P}$-a.s. for each $i$, the closure of $\mathcal{T}_i^o$ in
$\mathcal{T}$ contains precisely one point more than $\mathcal{T}_i^o$, $\rho_i$ say, and we can therefore write it as
$\mathcal{T}_i=\mathcal{T}_i^o\cup\{\rho_i\}$. We define a metric $d_{\mathcal{T}_i}$ and probability measure $\mu_i$ on
$\mathcal{T}_i$ by setting
\[d_{\mathcal{T}_i}:=\Delta^{\frac{1-\alpha}{\alpha}}_id_{\mathcal{T}}|_{\mathcal{T}_i\times\mathcal{T}_i},\hspace{20pt}
\mu_i(\cdot):=\frac{\mu(\cdot\cap \mathcal{T}_i)}{\Delta_i}.\]
Furthermore, let $\sigma_i$ be $\mu_i$-random vertices of $\mathcal{T}_i$, chosen independently for each $i$. The usefulness of
this decomposition of $\mathcal{T}$ into the subsets $(\mathcal{T}_i)_{i\in\mathbb{N}}$ is contained in the subsequent proposition,
which is a simple modification of parts of \cite{HPW}, Corollary 10, and is stated without proof.

{\propn\label{spinal} For every $\alpha\in(1,2)$, $\left\{((\mathcal{T}_i,d_{\mathcal{T}_i}),\mu_i,\rho_i,\sigma_i)\right\}_{i\in
\mathbb{N}}$ is an independent collection of copies of $((\mathcal{T},d_{\mathcal{T}}),\mu,\rho,\sigma)$, and moreover, the entire
family is independent of $(\Delta_i)_{i\in \mathbb{N}}$, which has a Poisson-Dirichlet $(\alpha^{-1},1-\alpha^{-1})$ distribution.}
\bigskip

Similarly to the argument of \cite{hamcroy}, we will apply this result recursively, and will label the objects generated by this
procedure using the address space of sequences that we now introduce. For $n\geq 0$, let
\[\Sigma_n:=\mathbb{N}^n,\hspace{20pt}\Sigma_*:=\bigcup_{m\geq
0}\Sigma_m,\]
where $\Sigma_0:=\{\emptyset\}$. For $i\in\Sigma_m, j\in\Sigma_n$, write $ij=i_1\dots
i_m j_1 \dots j_n$, and for $k\in\Sigma_*$, denote by $|k|$ the unique integer $n$ such that $k\in\Sigma_n$. Later, we will also write for $i\in\Sigma_m$, $i|_n=i_1\dots i_n$ for any $n\leq m$.

Continuing with our inductive procedure, given $((\mathcal{T}_i,d_{\mathcal{T}_i}),\mu_i,\rho_i,\sigma_i)$ for some $i\in\Sigma_*$,
we define $\left\{((\mathcal{T}_{ij},d_{\mathcal{T}_{ij}}),\mu_{ij},\rho_{ij},\sigma_{ij})\right\}_{j\in \mathbb{N}}$ and
$(\Delta_{ij})_{j\in\mathbb{N}}$ from $((\mathcal{T}_i,d_{\mathcal{T}_i}),\mu_i,\rho_i,\sigma_i)$ using exactly the same method as
that by which $\mathcal{T}$ was decomposed above. Thus, if the  $\sigma$-algebra generated by the random variables
$(\Delta_i)_{1\leq |i|\leq n}$ is denoted  by $\mathcal{F}_n$ for each $n\in\mathbb{N}$, by iteratively applying
Proposition~\ref{spinal} it is easy to deduce the following result.

{\cor \label{recur}Let $\alpha\in(1,2)$. For each $n\in\mathbb{N}$,
$\left\{((\mathcal{T}_i,d_{\mathcal{T}_i}),\mu_i,\rho_i,\sigma_i)\right\}_{i\in \Sigma_n}$ is an independent collection of copies of
$((\mathcal{T},d_{\mathcal{T}}),\mu,\rho,\sigma)$, independent of $\mathcal{F}_n$.}
\bigskip

Finally, for $i\in\Sigma_*\backslash\{\emptyset\}$, we will write $(\mathcal{E}_i,\mathcal{F}_i)$,
$(\mathcal{E}_i^D,\mathcal{F}_i^D)$, $N_i$, $\tilde{N}_i$, $N^D_i$ to represent the Dirichlet forms and eigenvalue counting
functions corresponding to $((\mathcal{T}_i,d_{\mathcal{T}_i}),\mu_i,\rho_i,\sigma_i)$. and set
\[D_i:=\Delta_{i|_1}\Delta_{i|_2}\dots\Delta_{i|_{|i|}},\]
which is actually the mass of $\mathcal{T}_i$ with respect to the original measure $\mu$. By convention, we set
$D_\emptyset:=1$, and when other objects are indexed by $\emptyset$, we are referring to the relevant quantities defined
from the original $\alpha$-stable tree.

\section{Mean spectral asymptotics}\label{spectralsec}

To prove the mean spectral asymptotics for $\alpha$-stable trees given in Theorem \ref{mainthm}(a), we will appeal to a renewal theorem argument. In doing this, we depend on a series of inequalities that allow the Neumann and Dirichlet eigenvalue counting functions of $(\mathcal{E},\mathcal{F},\mu)$ to be usefully
compared with those associated with Dirichlet forms on subsets of $\mathcal{T}$. In particular, the collection of subsets that we
consider will be those arising from the fine spinal decomposition of $\mathcal{T}$ described in Section \ref{stablesec}, namely
$(\mathcal{T}_i)_{i\in\mathbb{N}}$, and the first main result of this section is the following, where throughout this section we
suppose $\alpha\in(1,2)$ and define $\gamma:=\alpha/(2\alpha-1)$.

{\propn \label{comparison} $\mathbf{P}$-a.s., we have, for every $\lambda \geq 0$,
\[\sum_{i\in\mathbb{N}}N_i^D(\lambda \Delta_i^{1/\gamma})\leq N^D(\lambda)\leq {N}(\lambda)\leq
1+\sum_{i\in\mathbb{N}}\tilde{N}_i(\lambda \Delta_i^{1/\gamma}),\]
with the upper bound being finite.}
\bigskip

To derive this result, we will proceed via a sequence of lemmas. The first of these provides an alternative description of
$(\mathcal{E},\mathcal{F})$ that will be useful in proving the lower bound for $N^D(\lambda)$, which appears as Lemma \ref{lower}.
We write $(\mathcal{E}_{[[\rho,\sigma]]},\mathcal{F}_{[[\rho,\sigma]]})$ to represent the local regular Dirichlet form on the
compact real tree $([[\rho,\sigma]],d_\mathcal{T}|_{[[\rho,\sigma]]\times[[\rho,\sigma]]})$ equipped with the one-dimensional
Hausdorff measure that is constructed using \cite{Kigamidendrite}, Theorem 5.4 and which therefore satisfies the variational
equality analogous to (\ref{tres}). Note that in what follows we apply the convention that if a form $E$ is defined for functions
on a set $A$ and $f$ is a function defined on $B\supseteq A$, then we write ${E}(f, f)$ to mean ${E}(f|_A, f|_A)$.

{\lem \label{edecomp} $\mathbf{P}$-a.s., we can write
\begin{equation}\label{edash}
\mathcal{E}(f,f)=\mathcal{E}_{[[\rho,\sigma]]}(f,f)+\sum_{i\in\mathbb{N}}\Delta_i^{\frac{1-\alpha}{\alpha}}\mathcal{E}_i(f,f),
\hspace{20pt}\forall f\in\mathcal{F},
\end{equation}
\begin{equation}\label{fdash}
\mathcal{F}=\left\{f\in L^2(\mathcal{T},\mu): \mbox{$f|_{[[\rho,\sigma]]}\in\mathcal{F}_{[[\rho,\sigma]]}$, and also, for
every $i\in\mathbb{N}$, $f|_{\mathcal{T}_i}\in\mathcal{F}_{i}$}\right\}.
\end{equation}}
\begin{proof} Let $(\mathcal{E}',\mathcal{F}')$ be defined by setting $\mathcal{E}'(f,f)$ to be equal to the expression on the
right-hand side of (\ref{edash}) for any $f\in\mathcal{F}'$, where $\mathcal{F}'$ is defined to be equal to the right-hand side
of (\ref{fdash}). By results of \cite{Kigami}, Section 2.3, to show that $(\mathcal{E},\mathcal{F})$ and
$(\mathcal{E}',\mathcal{F}')$ are equal and establish the lemma, it will be enough to check that (\ref{tres}) still holds when
we replace $(\mathcal{E},\mathcal{F})$ by $(\mathcal{E}',\mathcal{F}')$.

Suppose $x\in\mathcal{T}_i^o$, $y\in\mathcal{T}_j^o$, for some $i\neq j$, then the infimum of interest can be rewritten as
\begin{eqnarray*}
\lefteqn{\inf\{\mathcal{E}'(f,f):\:f\in\mathcal{F}',\:f(x)=0,\:f(y)=1\}}\\
&=&\inf_{a,b\in\mathbb{R}}\inf\{\mathcal{E}'(f,f):\:f\in\mathcal{F}',f(x)=0,f(\rho_i)=a,f(\rho_j)=b,f(y)=1\}.
\end{eqnarray*}
Now, observe that if $f$ is in the collection of functions over which this double-infimum is taken, then so is $g$, where $g$
is equal to $f$ on $[[\rho,\sigma]]\cup\mathcal{T}_i\cup\mathcal{T}_j$ and equal to $f(\rho_k)$ on $\mathcal{T}_k$ for
$k\neq i,j$. Moreover, $g$ satisfies
\[\mathcal{E}'(g,g)=\Delta_i^{\frac{1-\alpha}{\alpha}}\mathcal{E}_i(f,f)+\mathcal{E}_{[[\rho,\sigma]]}(f,f)+\Delta_j^{\frac{1-\alpha}
{\alpha}}\mathcal{E}_j(f,f)\leq\mathcal{E}'(f,f),\]
and so we can neglect functions that are not constant on each $\mathcal{T}_k$, $k\neq i,j$. In particular, we need to compute
\[\inf_{a,b\in\mathbb{R}}\inf\{\Delta_i^{\frac{1-\alpha}{\alpha}}\mathcal{E}_i(f,f)+\mathcal{E}_{[[\rho,\sigma]]}(f,f)+
\Delta_j^{\frac{1-\alpha}{\alpha}}\mathcal{E}_j(f,f)\},\]
where the second infimum is taken over functions in $\mathcal{F}'$ that satisfy $f(x)=0,f(\rho_i)=a,f(\rho_j)=b,f(y)=1$ and are
constant on each $\mathcal{T}_k$, $k\neq i,j$. Since the forms $\mathcal{E}_i$, $\mathcal{E}_{[[\rho,\sigma]]}$ and $\mathcal{E}_j$
are zero on constant functions, we can apply their characterisation in terms of distance to obtain that this is equal to
\[\inf_{a,b\in\mathbb{R}}\left\{\frac{a^2}{d_\mathcal{T}(x,\rho_i)}+\frac{(b-a)^2}{d_\mathcal{T}(\rho_i,\rho_j)}+\frac{(1-b)^2}
{d_\mathcal{T}(\rho_j,y)}\right\},\]
and, from this, a simple quadratic optimisation using the additivity of the metric $d_\mathcal{T}$ along paths yields the desired
result in this case. The argument is similar for other choices of $x,y\in\mathcal{T}$.
\end{proof}

The method of proof of the next lemma is an adaptation of \cite{KigLap}, Proposition 6.3.

{\lem \label{lower} $\mathbf{P}$-a.s., we have, for every $\lambda \geq 0$,
\[N^D(\lambda)\geq \sum_{i\in\mathbb{N}}N_i^D(\lambda \Delta_i^{1/\gamma}).\]}
\begin{proof} First, define a quadratic form $(\mathcal{E}^{(0)},\mathcal{F}^{(0)})$ by setting
$\mathcal{E}^{(0)}:=\mathcal{E}|_{\mathcal{F}^{(0)}\times \mathcal{F}^{(0)}}$, where
\[\mathcal{F}^{(0)}:=\left\{f\in \mathcal{F}:f(x)=0, \forall x\in [[\rho,\sigma]]\cup\left(
\cup_{i\in\mathbb{N}}\{\sigma_i\}\right)\right\}.\]
Since $\mu([[\rho,\sigma]]\cup\left( \cup_{i\in\mathbb{N}}\{\sigma_i\}\right))=0$, it is possible to check that
$(\mathcal{E}^{(0)},\mathcal{F}^{(0)})$ is a regular Dirichlet form on $L^2(\mathcal{T},\mu)$ by applying \cite{FOT}, Theorem 4.4.3.
Moreover, since we have that $\mathcal{F}^{(0)}\subseteq \mathcal{F}^D$ and
$\mathcal{E}^{(0)}=\mathcal{E}^D|_{\mathcal{F}^{(0)}\times \mathcal{F}^{(0)}}$, we can again apply \cite{KigLap}, Theorem 4.5, to
deduce that $N^{(0)}(\lambda)\leq N^D(\lambda)$ for every $\lambda\geq 0$, where $N^{(0)}$ is the eigenvalue counting function for
$(\mathcal{E}^{(0)},\mathcal{F}^{(0)},\mu)$. Consequently, to complete the proof of the lemma, it will suffice to show that
$\mathbf{P}$-a.s. we have, for every $\lambda \geq 0$,
\begin{equation}\label{n1bound}
N^{(0)}(\lambda)\geq \sum_{i\in\mathbb{N}}N_i^D(\lambda \Delta_i^{1/\gamma}).
\end{equation}
To demonstrate that this is indeed the case, first fix $i\in \mathbb{N}$ and suppose $f$ is an eigenfunction of
$(\mathcal{E}_i^D,\mathcal{F}_i^D,\mu_i)$ with eigenvalue $\lambda \Delta_i^{1/\gamma}$. If we set
\[g(x):=\left\{\begin{array}{ll}
          f(x), & \mbox{for }x\in \mathcal{T}_i, \\
          0 & \mbox{otherwise,}
        \end{array}\right.
\]
then we can apply Lemma \ref{edecomp} to deduce that, for $h\in\mathcal{F}^{(0)}$,
\[{\mathcal{E}}^{(0)}(g,h)=\Delta_i^{\frac{1-\alpha}{\alpha}}\mathcal{E}_i^D(f,h)=\lambda \Delta_i\int_{\mathcal{T}_i}
fhd\mu_i=\lambda \int_\mathcal{T} gh d\mu.\]
Thus $g$ is an eigenfunction of $({\mathcal{E}}^{(0)},{\mathcal{F}}^{(0)},\mu)$ with eigenvalue $\lambda$, and (\ref{n1bound})
follows.
\end{proof}

We now prove the upper bound for $N(\lambda)$. In establishing the corresponding estimates in \cite{hamcroy}, \cite{Hamasymp} and \cite{KigLap}, extensions of the Dirichlet form of interest for which the eigenvalue counting function could easily be controlled were constructed, and we will follow a similar approach here. However, since the collection of sets $(\mathcal{T}_i)_{i\in\mathbb{N}}$ is infinite, compactness issues prevent us from directly imitating this procedure to define a single suitable Dirichlet form extension of $(\mathcal{E},\mathcal{F})$. Instead we will consider a sequence of Dirichlet form extensions, each built as a sum of Dirichlet forms on the sets in a finite decomposition of $\mathcal{T}$.

{\lem \label{upperbound} $\mathbf{P}$-a.s., we have, for every $\lambda \geq 0$,
\[\tilde{N}(\lambda)\leq \sum_{i\in\mathbb{N}}\tilde{N}_i(\lambda \Delta_i^{1/\gamma}),\]
with the upper bound being finite.}
\begin{proof} We start by describing our sequence of Dirichlet form extensions of $(\mathcal{E},\mathcal{F})$. Fix $k\in\mathbb{N}$, and set $\mathcal{S}_k:=\mathcal{T}\backslash \cup_{i=1}^k\mathcal{T}_i^o$, which is a compact real tree when equipped with the restriction of $d_\mathcal{T}$ to $\mathcal{S}_k$. Again appealing to \cite{Kigamidendrite}, Theorem 5.4, let $(\mathcal{E}_{\mathcal{S}_k},\mathcal{F}_{\mathcal{S}_k})$ be the associated local regular Dirichlet form on $L^2({\mathcal{S}_k},\mu(\cdot\cap\mathcal{S}_k))$. Now, define a pair $(\mathcal{E}^{(k)},\mathcal{F}^{(k)})$ by setting $\mathcal{F}^{(k)}$ equal to
\[\left\{f\in L^2(\mathcal{T},\mu): \begin{array}{r}
                                                         \mbox{for every $i\in\{1,\dots,k\}$, $f=f_{i}$ on $\mathcal{T}_i^o$}\\
                                                         \mbox{for some $f_i\in\mathcal{F}_{i}$, and also
                                                         $f|_{\mathcal{S}_k}\in\mathcal{F}_{\mathcal{S}_k}$}
                                                   \end{array}\right\},\]
and
\[\mathcal{E}^{(k)}(f,g):=\mathcal{E}_{\mathcal{S}_k}(f,g)+\sum_{i=1}^k\Delta_i^{\frac{1-\alpha}{\alpha}}\mathcal{E}_i(f_i,g_i),\hspace{20pt}\forall f,g\in\mathcal{F}^{(k)}.\]
Since $\mathcal{F}_i$ is dense in $L^2(\mathcal{T}_i,\mu(\cdot \cap \mathcal{T}_i))$ and $\mathcal{F}_{\mathcal{S}_k}$ is dense in $L^2(\mathcal{S}_k,\mu(\cdot\cap\mathcal{S}_k))$, we clearly have that $\mathcal{F}^{(k)}$ is dense in $L^2(\mathcal{T},\mu)$. Furthermore, applying the corresponding properties for the Dirichlet forms in the sum, it is easy to check that $(\mathcal{E}^{(k)},\mathcal{F}^{(k)})$ is a non-negative symmetric bilinear form satisfying the Markov property, by which we mean that if $f \in\mathcal{F}^{(k)}$ and $\overline{f} := (0 \vee f) \wedge 1$, then $\overline{f}\in\mathcal{F}^{(k)}$ and $\mathcal{E}^{(k)}(\overline{f},\overline{f})\leq \mathcal{E}^{(k)}(f,f)$. Hence to prove that $(\mathcal{E}^{(k)},\mathcal{F}^{(k)})$ is a Dirichlet form on $L^2(\mathcal{T},\mu)$ it remains to demonstrate that $(\mathcal{F}^{(k)},\|\cdot\|_{\mathcal{E}^{(k)},\mu})$ is a Hilbert space, where $\|\cdot\|_{\mathcal{E}^{(k)},\mu}$ is the defined as at (\ref{enorm}). Given that the number of terms in the above sum is finite, this is elementary, and so $(\mathcal{E}^{(k)},\mathcal{F}^{(k)})$ is indeed a Dirichlet form on $L^2(\mathcal{T},\mu)$. Moreover, by a simple adaptation of the proof of \cite{KigLap}, Proposition 6.2(3), it can also be shown that the identity map from $(\mathcal{F}^{(k)},\|\cdot\|_{\mathcal{E}^{(k)},\mu})$ to $L^2(\mathcal{T},\mu)$ is compact, and so the eigenvalue counting function for $(\mathcal{E}^{(k)},\mathcal{F}^{(k)},\mu)$, $N^{(k)}$ say, is finite everywhere on the real line.

In order to demonstrate that $(\mathcal{E}^{(k)},\mathcal{F}^{(k)})$ is an extension of $(\mathcal{E},\mathcal{F})$, we first observe that, by following an identical line of reasoning to that applied in the proof of Lemma \ref{edecomp}, it is possible to prove that the Dirichlet form $(\mathcal{E},\mathcal{F})$ satisfies
\[\mathcal{E}(f,g):=\mathcal{E}_{\mathcal{S}_k}(f,g)+\sum_{i=1}^k\Delta_i^{\frac{1-\alpha}{\alpha}}\mathcal{E}_i(f,g),\hspace{20pt}\forall f,g\in\mathcal{F},\]
\[\mathcal{F}=\left\{f\in L^2(\mathcal{T},\mu): \begin{array}{r}
                                                         \mbox{for every $i\in\{1,\dots,k\}$, $f|_{\mathcal{T}_i}\in\mathcal{F}_{i}$, and also $f|_{\mathcal{S}_k}\in\mathcal{F}_{\mathcal{S}_k}$}
                                                   \end{array}\right\}.\]
From this characterisation of $(\mathcal{E},\mathcal{F})$, it is immediate that $\mathcal{F}\subseteq \mathcal{F}^{(k)}$ and $\mathcal{E}=\mathcal{E}^{(k)}|_{\mathcal{F}\times \mathcal{F}}$, as desired. Consequently a further application of \cite{KigLap}, Theorem 4.5, yields that $\tilde{N}(\lambda)\leq \tilde{N}^{(k)}(\lambda):=N^{(k)}(\lambda)-1$, and we complete our proof by establishing suitable upper bounds for $\tilde{N}^{(k)}$.

Let $f\not\equiv 0$ be an eigenfunction of $(\mathcal{E}^{(k)},\mathcal{F}^{(k)})$ with eigenvalue $\lambda>0$. If $i\in\{1,\dots,k\}$ and $g\in\mathcal{F}_{i}$, then define a function $h\in \mathcal{F}^{(k)}$ by setting
\[h(x):=\left\{\begin{array}{ll}
                 g(x),&\mbox{if }x\in\mathcal{T}_{i}^o,\\
                 0,&\mbox{otherwise.}
               \end{array}\right.\]
By the definition of $\mathcal{E}^{(k)}$ and this construction, we have that
\[\mathcal{E}_{i}(f,g)=\Delta^{\frac{\alpha-1}{\alpha}}_{i}\mathcal{E}^{(k)}(f,h)=\lambda\Delta^{\frac{\alpha-1}{\alpha}}_{i}\int_{\mathcal{T}}fhd\mu=\lambda\Delta^{1/\gamma}_{i}\int_{\mathcal{T}_{i}}fgd\mu_{i}.\]
Thus if $f$ is not identically zero on $\mathcal{T}_{i}$, then it must be the case that $\lambda\Delta_{i}^{1/\gamma}$ is an eigenvalue of $(\mathcal{E}_{i},\mathcal{F}_{i},\mu_{i})$. Similarly, if $g\in\mathcal{F}_{\mathcal{S}_k}$, $h$ is defined by
\[h(x):=\left\{\begin{array}{ll}
                 g(x),&\mbox{if }x\in\mathcal{S}_k,\\
                 0,&\mbox{otherwise,}
               \end{array}\right.\]
and $f$ is not identically zero on $\mathcal{S}_k$, then $\lambda$ is an eigenvalue of $(\mathcal{E}_{\mathcal{S}_k},\mathcal{F}_{\mathcal{S}_{k}},\mu(\cdot\cap\mathcal{S}_k))$. Combining these facts, it follows that, for $\lambda\geq 0$,
\begin{equation}\label{n2bound}
\tilde{N}^{(k)}(\lambda)\leq \tilde{N}_{\mathcal{S}_k}(\lambda)+\sum_{i=1}^k \tilde{N}_i(\lambda\Delta_i^{1/\gamma}),
\end{equation}
where $\tilde{N}_{\mathcal{S}_k}$ is the (strictly positive) eigenvalue counting function for $(\mathcal{E}_{\mathcal{S}_{k}},\mathcal{F}_{\mathcal{S}_{k}},\mu(\cdot\cap\mathcal{S}_k))$.

Now note that, by Lemma \ref{firste}, the first term in (\ref{n2bound}) is zero whenever $\lambda$ is strictly less than ${1}/{{\rm{diam}}_{d_\mathcal{T}}(\mathcal{S}_k)\mu(\mathcal{S}_k)}$. Thus we can conclude that, for each $k\in\mathbb{N}$,
\[\tilde{N}(\lambda)\leq\sum_{i=1}^k \tilde{N}_i(\lambda\Delta_i^{1/\gamma}),\hspace{20pt}\forall\lambda< \frac{1}{{\rm{diam}}_{d_\mathcal{T}}(\mathcal{T})(1-\Delta_1-\dots-\Delta_k)}.\]
Since ${\rm{diam}}_{d_\mathcal{T}}(\mathcal{T})<\infty$ and $\Delta_1+\dots+\Delta_k\rightarrow1$ as $k\rightarrow\infty$, $\mathbf{P}$-a.s., the upper bound of the lemma follows.

It still remains to show the $\mathbf{P}$-a.s. finiteness of $\sum_{i\in\mathbb{N}} \tilde{N}_i(\lambda\Delta_i^{1/\gamma})$. To show this is the case, we again apply Lemma \ref{firste} to obtain that the $i$th term is zero whenever \[\lambda<{\Delta_i^{-{1/\gamma}}}\left({{\rm{diam}}_{d_{\mathcal{T}_i}}(\mathcal{T}_i)\mu_i(\mathcal{T}_i)}\right)^{-1}
={}\left({\Delta_i}{\rm{diam}}_{d_{\mathcal{T}}}(\mathcal{T}_i)\right)^{-1}.\]
The result is readily obtained from this on noting that $(\Delta_i{\rm{diam}}_{d_{\mathcal{T}}}(\mathcal{T}_i))^{-1}$ is bounded below by $(\Delta_i{\rm{diam}}_{d_{\mathcal{T}}}(\mathcal{T}))^{-1}\rightarrow\infty$ as $i\rightarrow\infty$ and so only a finite number of terms (each of which is finite) in the sum are non-zero, $\mathbf{P}$-a.s.
\end{proof}

Given the eigenvalue counting function comparison result of Proposition \ref{comparison}, which follows from (\ref{bracket}), Lemma \ref{lower} and Lemma \ref{upperbound},  we now turn to our renewal theorem argument to derive mean spectral asymptotics for $\alpha$-stable trees. Similarly to \cite{hamcroy}, define the functions $(\eta_i)_{i\in\Sigma_*}$ by, for $t\in\mathbb{R}$,
\[\eta_i(t):=N_i^D(e^t)-\sum_{j\in\mathbb{N}}N_{ij}^D(e^t \Delta_{ij}^{1/\gamma}),\]
and let $\eta:=\eta_\emptyset$. By Proposition \ref{comparison}, $\eta_i(t)$ is non-negative and finite for every $t\in\mathbb{R}$, $\mathbf{P}$-a.s., and the dominated convergence theorem implies that $\eta_i$ has cadlag paths, $\mathbf{P}$-a.s. Furthermore, if we set $X_i(t):=N^D_i(e^t)$, and $X:=X_\emptyset$, then it is immediate that the following evolution equation holds:
\begin{equation}\label{evo}
X(t)=\eta(t)+\sum_{i\in\mathbb{N}}X_i(t+\gamma^{-1}\ln \Delta_i).
\end{equation}
We now introduce associated discounted mean processes
\[m(t):=e^{-\gamma t}\mathbf{E}X(t),\hspace{20pt}u(t):=e^{-\gamma t}\mathbf{E}\eta(t),\]
define a measure $\nu$ by $\nu([0,t])=\sum_{i\in\mathbb{N}} \mathbf{P}(\Delta_i\geq e^{-\gamma t})$, and let $\nu_\gamma$ be the measure that satisfies $\nu_\gamma(dt)=e^{-\gamma t}\nu(dt)$. The properties we require of $m$, $u$ and $\nu_\gamma$ are collected in the following lemma. In the proof of this result, which is an adaptation of \cite{hamcroy}, Lemma 20, it will be convenient to define, for $x\geq 0$,
\begin{equation}\label{phi}
\psi(x):=\sum_{i\in\mathbb{N}}\mathbf{E}(\Delta_i^x).
\end{equation}
By \cite{PY}, equation (6), this quantity is infinite for $x\leq \alpha^{-1}$, and otherwise satisfies
\begin{equation}\label{phiequal}
\psi(x)=\frac{\alpha-1}{\alpha x-1}.
\end{equation}
Moreover, we set
\begin{equation}\label{betadef}
\beta:=\frac{\alpha-1}{2\alpha-1}\equiv{\gamma}-\frac{1}{2\alpha-1}.
\end{equation}

\begin{lem}\label{lem:renconds}
(a) The function $m$ is bounded.\\
(b) The function $u$ is in $L^1(\mathbb{R})$ and, for any $\varepsilon>0$, $u(t)=O(e^{-(\beta-\varepsilon)t})$ as $t\rightarrow \infty$.\\
(c) The measure $\nu_\gamma$ is a Borel probability measure on $[0,\infty)$, and the integral $\int_0^\infty t\nu_\gamma(dt)$ is finite.
\end{lem}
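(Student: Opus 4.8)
The plan is to verify the three renewal-theorem hypotheses in turn, using the eigenvalue comparison of Proposition \ref{comparison}, the self-similarity of Corollary \ref{recur}, and the explicit moment formula \eqref{phiequal} for the Poisson--Dirichlet masses. For part (c), note that by the self-similar structure $\nu([0,t]) = \sum_{i} \mathbf{P}(\Delta_i \geq e^{-\gamma t})$ and, differentiating the identity $\psi(x) = \sum_i \mathbf{E}(\Delta_i^x) = \int \sum_i \mathbf{P}(\Delta_i \geq s)\, d(s^x)$ in $x$, one sees that the total mass of $\nu_\gamma$ equals $\psi(\gamma) = \gamma/(\alpha\gamma - 1) \cdot (\alpha-1)$; a direct substitution of $\gamma = \alpha/(2\alpha-1)$ into \eqref{phiequal} gives $\psi(\gamma) = 1$, confirming $\nu_\gamma$ is a probability measure. (Concretely, $\nu_\gamma(dt) = e^{-\gamma t}\nu(dt)$ and $\int_0^\infty e^{-\gamma t}\nu(dt) = \mathbf{E}\sum_i \Delta_i^\gamma = \psi(\gamma) = 1$.) Since $\Delta_i \in (0,1)$ forces $\nu$ to be supported on $[0,\infty)$, $\nu_\gamma$ is a Borel probability measure there. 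Finiteness of $\int_0^\infty t\, \nu_\gamma(dt)$ then follows because $\int_0^\infty t e^{-\gamma t} \nu(dt)$ is, up to a constant, $\frac{d}{dx}\psi(x)\big|_{x=\gamma}$ (again by differentiating under the integral the representation of $\psi$), and $\psi$ is real-analytic on $(\alpha^{-1}, \infty)$ with $\gamma > \alpha^{-1}$; one checks the interchange of $\frac{d}{dx}$ and $\sum_i \mathbf{E}$ is justified by monotone/dominated convergence since $\psi(x)$ is finite for $x$ slightly below $\gamma$.

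For part (b), first use the definition $\eta(t) = N^D(e^t) - \sum_{i} N_i^D(e^t \Delta_i^{1/\gamma})$ together with the Neumann--Dirichlet sandwich \eqref{bracket} and Proposition \ref{comparison} to get the pointwise bound $0 \leq \eta(t) \leq 2 + \big(1 + \sum_i \tilde N_i(e^t\Delta_i^{1/\gamma})\big) - \sum_i N_i^D(e^t\Delta_i^{1/\gamma})$; since $\tilde N_i \leq N_i^D + 2$ fails in the wrong direction, I instead bound $\eta(t) \leq N^D(e^t) - \sum_i N_i^D(e^t\Delta_i^{1/\gamma}) \leq 3 + \sum_i\big(\tilde N_i(e^t\Delta_i^{1/\gamma}) - N_i^D(e^t\Delta_i^{1/\gamma})\big)$ and note each bracketed term is at most $2\cdot\mathbf{1}\{N_i^D(e^t\Delta_i^{1/\gamma}) \geq 1\}$ by \eqref{bracket}, while for small arguments Lemma \ref{firste} makes it vanish. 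More usefully, one bounds $\eta(t) \leq 3 + 2\#\{i : e^t \Delta_i^{1/\gamma} \geq (\operatorname{diam}_{d_{\mathcal{T}}}(\mathcal{T}_i)\Delta_i)^{-1}\}$, take expectations, and estimate this count using a Markov-type inequality with a fractional moment: for any $\delta \in (0,1)$, $\mathbf{E}\#\{\cdots\} \leq e^{\delta t}\sum_i \mathbf{E}\big(\Delta_i^{\delta/\gamma}(\Delta_i \operatorname{diam}_{d_{\mathcal{T}}}(\mathcal{T}_i))^{\delta}\big)$, and since $\operatorname{diam}_{d_{\mathcal{T}}}(\mathcal{T}_i) = \Delta_i^{(\alpha-1)/\alpha}\operatorname{diam}_{d_{\mathcal{T}_i}}(\mathcal{T}_i)$ with $\operatorname{diam}_{d_{\mathcal{T}_i}}(\mathcal{T}_i)$ independent of $\Delta_i$ and having finite moments of all orders (as the CRT-type diameter does — this needs a reference or short argument), this is a constant times $e^{\delta t}\sum_i\mathbf{E}(\Delta_i^{\delta/\gamma + \delta + \delta(\alpha-1)/\alpha}) = c\, e^{\delta t}\psi(\delta(1/\gamma + 1 + (\alpha-1)/\alpha))$, which is finite and gives $\mathbf{E}\eta(t) = O(e^{\delta t})$; optimizing, the requirement that the $\psi$-argument exceed $\alpha^{-1}$ forces $\delta > \beta$-worth of decay after multiplying by $e^{-\gamma t}$, yielding $u(t) = O(e^{-(\beta-\varepsilon)t})$. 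Integrability of $u$ over $\mathbb{R}$ then follows by combining this tail bound with the bound near $-\infty$, where $\eta(t) = 0$ for $t$ sufficiently negative by Lemma \ref{firste} (since then $N^D(e^t) = 0$, forcing each $N_i^D$ term to $0$ too).

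For part (a), the bound on $m(t) = e^{-\gamma t}\mathbf{E}X(t) = e^{-\gamma t}\mathbf{E}N^D(e^t)$ is obtained by iterating the evolution equation \eqref{evo}: taking expectations and using Corollary \ref{recur} (so that $\mathbf{E}X_i(s) = \mathbf{E}X(s)$ conditionally on $\mathcal{F}_n$), one gets $m(t) = u(t) + \int_0^\infty m(t-s)\,\nu_\gamma(ds)$, i.e. $m$ solves the renewal equation driven by the probability measure $\nu_\gamma$ with forcing term $u \in L^1$. Boundedness of $m$ then follows from the standard fact that the renewal equation with an integrable, eventually-bounded forcing term and a probability step-distribution has a bounded solution — or more directly, $m = u * \sum_{n\geq 0}\nu_\gamma^{*n}$ and one truncates: the contribution from $\nu_\gamma^{*n}$ with $n$ large is controlled by the tail decay of $u$ at $+\infty$ from part (b) together with the fact that $\nu_\gamma^{*n}$ concentrates on $[n\theta, \infty)$ for some $\theta > 0$, since $\nu_\gamma$ assigns positive mass away from $0$. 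I expect \textbf{part (b)} to be the main obstacle: the delicate point is getting a quantitative rate rather than a mere $o(1)$ bound on $u$, which is exactly what forces the argument of $\psi$ to stay strictly above $\alpha^{-1}$ and dictates the exponent $\beta = (\alpha-1)/(2\alpha-1)$; one must also be careful that the diameter random variables $\operatorname{diam}_{d_{\mathcal{T}_i}}(\mathcal{T}_i)$ genuinely have the requisite finite moments, which may need a separate tail estimate for the diameter of an $\alpha$-stable tree (available from the known bounds on the height process or from \cite{LegallDuquesne}).
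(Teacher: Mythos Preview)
Your overall strategy is sound, but there are two concrete issues.

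\textbf{Part (c): wrong Laplace exponent.} You claim the total mass of $\nu_\gamma$ equals $\psi(\gamma)$ and that $\psi(\gamma)=1$. Both statements are incorrect. Since $\nu$ is the superposition over $i$ of the laws of $T_i:=-\gamma^{-1}\ln\Delta_i$, one has
\[
\int_0^\infty e^{-\gamma t}\,\nu(dt)=\sum_{i}\mathbf{E}\big[e^{-\gamma T_i}\big]=\sum_i\mathbf{E}\Delta_i=\psi(1)=1,
\]
not $\psi(\gamma)$; indeed $\psi(\gamma)=(2\alpha-1)/(\alpha-1)\neq1$ by \eqref{phiequal}. The same slip affects your first-moment computation: $\int_0^\infty t\,\nu_\gamma(dt)=-\gamma^{-1}\psi'(1)$, not $\psi'(\gamma)$. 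The paper simply notes $\nu_\gamma$ is a probability measure because $\sum_i\Delta_i=1$ a.s., and checks the first moment is $\sum_i\mathbf{E}(\Delta_i|\ln\Delta_i|)<\infty$ via the Poisson--Dirichlet moment formulae.

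\textbf{Part (a): different route, with a gap.} The paper does \emph{not} prove boundedness of $m$ via the renewal equation. It first establishes the full expansion $X(t)=\sum_{i\in\Sigma_*}\eta_i(t+\gamma^{-1}\ln D_i)$ (showing the remainder in the $k$-fold iterate of \eqref{evo} vanishes a.s.\ by Borel--Cantelli), and then bounds $m(t)$ by twice $e^{-\gamma t}$ times the expected number of particles born by time $t+\ln\mathrm{diam}(\tilde{\mathcal{T}})$ in a Crump--Mode--Jagers branching process, which is $O(e^{\gamma t})$. Your route --- write $m=u*\sum_{n\geq0}\nu_\gamma^{*n}$ and infer boundedness from $u\in L^1$ --- needs more than you indicate: $L^1$ alone does not give boundedness of $u*U$ since the renewal measure $U$ has linearly growing mass, and you have not shown $u$ is bounded, nor that $m$ is a priori finite so as to justify the renewal equation in the first place. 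The paper's direct CMJ bound sidesteps this and yields (a) independently of (b).

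\textbf{Part (b): right idea, wrong bookkeeping.} Your approach matches the paper's (Chebyshev on diameter-threshold indicators, controlled by $\psi$), but the relevant event is $\{\Delta_i^{1/\gamma}\mathrm{diam}_{d_{\mathcal{T}_i}}(\mathcal{T}_i)\geq e^{-t}\}$ in the \emph{rescaled} metric, and the Chebyshev bound gives $e^{\theta t}\mathbf{E}(\delta^\theta)\psi(\theta/\gamma)$, finite precisely for $\theta>\gamma/\alpha=(2\alpha-1)^{-1}$; this is what forces the exponent $\beta$. Your exponent on $\Delta_i$ after substitution is off by a factor of two. Also, ``$\eta(t)=0$ for $t$ sufficiently negative'' holds only a.s.\ with a \emph{random} threshold, so $u(t)=e^{-\gamma t}\mathbf{E}\eta(t)$ need not vanish anywhere; the paper instead reuses the same Chebyshev bound with $\theta>\gamma$ to get $u(t)=O(e^{(\theta-\gamma)t})$ as $t\to-\infty$. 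Finiteness of $\mathbf{E}(\mathrm{diam}_{d_{\mathcal{T}}}(\mathcal{T})^\theta)$ for all $\theta>0$ is cited from the fragmentation literature.
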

\begin{proof} First observe that by iterating (\ref{evo}) we obtain for each $k\in\mathbb{N}$ that
\[X(t)=\sum_{|i|<k}\eta_i(t+\gamma^{-1}\ln D_i)+\sum_{i\in\Sigma_k}X_i(t+\gamma^{-1}\ln D_i).\]
Thus establishing the $\mathbf{P}$-a.s. limit
\begin{equation}\label{limit}
\lim_{k\rightarrow\infty}\sum_{i\in\Sigma_k}X_i(t+\gamma^{-1}\ln D_i)= 0,\hspace{20pt}\forall t\in\mathbb{R},
\end{equation}
will also confirm that we can $\mathbf{P}$-a.s. write
\begin{equation}\label{xexp}
X(t)=\sum_{i\in\Sigma_*}\eta_i(t+\gamma^{-1}\ln D_i),\hspace{20pt}\forall t\in\mathbb{R}.
\end{equation}
To prove that (\ref{limit}) does indeed hold, we first note that, since $X_i(t+\gamma^{-1}\ln D_i)=N^D_i(e^tD_i^{1/\gamma})=0$ for $e^tD_i^{1/\gamma}<{\rm diam}_{d_{\mathcal{T}_i}}({\mathcal{T}_i})^{-1}$, the sum appearing in (\ref{limit}) is zero if \[\sup_{i\in\Sigma_k}D_i^{1/\gamma}{\rm diam}_{d_{\mathcal{T}_i}}({\mathcal{T}_i})<e^{-t}.\]
Hence, to prove (\ref{limit}), it will be enough to show that this supremum converges $\mathbf{P}$-a.s. to zero as $k\rightarrow\infty$. To establish that this is the case, we will apply the following bound: for $\varepsilon,\theta>0$,
\begin{eqnarray}
\sum_{k=0}^\infty \mathbf{P}\left(\sup_{i\in\Sigma_k}D_i^{1/\gamma}{\rm diam}_{d_{\mathcal{T}_i}}({\mathcal{T}_i})\geq \varepsilon\right)&\leq&\sum_{k=0}^\infty \mathbf{P}\left(\sum_{i\in\Sigma_k}D_i^{\theta/\gamma}{\rm diam}_{d_{\mathcal{T}_i}}({\mathcal{T}_i})^\theta\geq \varepsilon^\theta\right)\nonumber\\
&\leq& \varepsilon^{-\theta}\mathbf{E}\left({\rm diam}_{d_{\mathcal{T}}}({\mathcal{T}})^\theta\right) \sum_{k=0}^\infty \sum_{i\in\Sigma_k}\mathbf{E}\left(D_i^{\theta/\gamma}\right)\nonumber\\
&=&\varepsilon^{-\theta}\mathbf{E}\left({\rm diam}_{d_{\mathcal{T}}}({\mathcal{T}})^\theta\right) \sum_{k=0}^\infty \psi(\theta\gamma^{-1})^k,\label{upperbound1}
\end{eqnarray}
where we have made use of the recursive decomposition result of Corollary \ref{recur}. Exploiting the fragmentation process description of $\alpha$-stable trees proved in \cite{Mier}, it is possible to apply \cite{Haas}, Proposition 14, to check that the expectation $\mathbf{E}({\rm diam}_{d_{\mathcal{T}}}({\mathcal{T}})^\theta)$ is finite for any $\theta>0$. Furthermore, by (\ref{phiequal}), we have that $\psi(\theta\gamma^{-1})<1$ for $\theta>\gamma$. Thus, by choosing $\theta>\gamma$, we obtain that the expression at (\ref{upperbound1}) is finite, and therefore the Borel-Cantelli lemma can be applied to complete the proof that (\ref{limit}) and (\ref{xexp}) hold.

From the characterisation of $X$ at (\ref{xexp}) and the definition of $\eta_i$ we see that
\[m(t)=e^{-\gamma t}\sum_{i\in\Sigma_*}\mathbf{E}\left(N_i^D(e^tD_i^{1/\gamma})-\sum_{j\in\mathbb{N}}N_{ij}^D(e^tD_{ij}^{1/\gamma})\right).\]
Since
\begin{eqnarray}
\eta_i(t+\gamma^{-1} \ln D_i) &=& N_i^D(e^tD_i^{1/\gamma})-\sum_{j\in\mathbb{N}}N_{ij}^D(e^tD_{ij}^{1/\gamma})\nonumber\\
&\leq& \mathbf{1}_{\{D_i^{1/\gamma}{\rm diam}_{d_{\mathcal{T}_i}}({\mathcal{T}_i})\geq e^{-t}\}}+\sum_{j\in\mathbb{N}}\left(\tilde{N}_{ij}(e^tD_{ij}^{1/\gamma})-N_{ij}^D(e^tD_{ij}^{1/\gamma})\right),\nonumber\\
&\leq& \mathbf{1}_{\{D_i^{1/\gamma}{\rm diam}_{d_{\mathcal{T}_i}}({\mathcal{T}_i})\geq e^{-t}\}}+\sum_{j\in\mathbb{N}}\mathbf{1}_{\{D_{ij}^{1/\gamma}{\rm diam}_{d_{\mathcal{T}_{ij}}}({\mathcal{T}_{ij}})\geq e^{-t}\}},\label{etabound}
\end{eqnarray}
where we have applied (\ref{bracket}), Lemma \ref{firste} and Proposition \ref{comparison}, it follows that
\begin{eqnarray}
m(t)&\leq&2e^{-\gamma t}\sum_{i\in\Sigma_*}\mathbf{P}\left(D_i^{1/\gamma}{\rm diam}_{d_{\mathcal{T}_i}}({\mathcal{T}_i})\geq e^{-t}\right)\nonumber\\
&=&2e^{-\gamma t}\mathbf{E}\left(\#\left\{i\in\Sigma_*:-\gamma^{-1} \ln D_i \leq t+\ln{\rm diam}_{d_{\tilde{\mathcal{T}}}}({\tilde{\mathcal{T}}})\right\}\right),\nonumber
\end{eqnarray}
where $(\tilde{\mathcal{T}},d_{\tilde{\mathcal{T}}})$ is an independent copy of $({\mathcal{T}},d_{{\mathcal{T}}})$. Similarly to the corresponding argument in \cite{hamcroy}, by considering the Crump-Mode-Jagers branching process with particles $i\in\Sigma_*$, where $i\in\Sigma_*$ has offspring $ij$ at time $-\ln \Delta_{ij}$ after its birth, $j\in\mathbb{N}$, it is possible to show that $\mathbf{E}(\#\{i\in\Sigma_*:\:-\ln D_i\leq t\})\leq Ce^{t}$ for every $t\in\mathbb{R}$, where $C$ is a finite constant. Hence $m(t)\leq 2C\mathbf{E}\left({\rm diam}_{d_{\mathcal{T}}}({\mathcal{T}})^{\gamma}\right)$ for every $t\in\mathbb{R}$. As already noted, the moments of the diameter of an $\alpha$-stable tree are finite and so this bound establishes that $m$ is bounded.

For part (b), first observe that
\[u(t)=e^{-\gamma t}\mathbf{E}\eta(t)\leq e^{-\gamma t}\sum_{i\in\Sigma_*}\mathbf{E}\eta_i(t+\gamma^{-1}\ln D_i)=m(t),\]
and so $u$ is bounded. Thus, since $\eta$ is $\mathbf{P}$-a.s. cadlag, then $u$ is also measurable. Furthermore, multiplying (\ref{etabound}) by $e^{-\gamma t}$ and taking expectations yields, for any $\theta>0$,
\begin{eqnarray*}
u(t)&\leq & e^{-\gamma t}\left(\mathbf{P}\left({\rm diam}_{d_{\mathcal{T}}}({\mathcal{T}})\geq e^{-t}\right)+\sum_{i\in\mathbb{N}}\mathbf{P}\left(\Delta_{i}^{1/\gamma}{\rm diam}_{d_{\mathcal{T}_{i}}}({\mathcal{T}_{i}})\geq e^{-t}\right)\right)\\
&\leq &e^{(\theta-\gamma)t}\mathbf{E}\left({\rm diam}_{d_{\mathcal{T}}}({\mathcal{T}})^\theta\right)\left(1+\sum_{i\in\mathbb{N}}\mathbf{E}\left(\Delta_i^{\theta/\gamma}\right)\right)\\
&=& C_{\theta} e^{(\theta-\gamma)t},
\end{eqnarray*}
where the second inequality is a simple application of Chebyshev's inequality and $C_{\theta}:=\mathbf{E}({\rm diam}_{d_{\mathcal{T}}}({\mathcal{T}})^\theta)(1+\psi(\theta\gamma^{-1}))$. As all the positive moments of ${\rm diam}_{d_{\mathcal{T}}}({\mathcal{T}})$ are finite and $\psi(\theta\gamma^{-1})$ is finite for $\theta>\gamma\alpha^{-1}$, $C_\theta$ is a finite constant for any $\theta>(2\alpha-1)^{-1}$. In particular, choosing $\theta=(2\alpha-1)^{-1}+\varepsilon$, we obtain $u(t)=O(e^{-(\beta-\varepsilon)t})$ as $t\rightarrow \infty$, which is the second claim of part (b). We further note that by setting $\theta=1+\gamma$, the above bound implies $u(t)=O(e^{t})$ as  $t\rightarrow -\infty$, which, in combination with our earlier observations, establishes that $u\in L^1(\mathbb{R})$ as desired.

Finally, to demonstrate that $\nu_\gamma$ is a Borel probability measure on $[0,\infty)$ is elementary given that $\psi(1)=\sum_{i\in \mathbb{N}}\Delta_i=1$, $\mathbf{P}$-a.s. Moreover, by definition the integrability condition can be rewritten $\sum_{i\in\mathbb{N}}\mathbf{E}(\Delta_i|\ln\Delta_i|)<\infty$, and this can be confirmed by a second application of equation (6) of \cite{PY}.
\end{proof}

Applying this lemma, it would be possible to apply the renewal theorem of \cite{Karlin} exactly as in \cite{hamcroy} to deduce the convergence of $m(t)$ as $t\rightarrow\infty$. However, in order to establish an estimate for the second order term, we present a direct proof of the renewal theorem in our setting. The $\beta$ in the statement of the result is defined as at (\ref{betadef}), and $m(\infty)$ is the constant defined by
\begin{equation}\label{minf}
m(\infty):=\frac{\int_{-\infty}^\infty
u(t)dt}{\int_0^\infty t\nu_\gamma(dt)}.
\end{equation}
That $m(\infty)$ is finite and non-zero is an easy consequence of Lemma \ref{lem:renconds}.

{\propn \label{propn:convrate} For any $\varepsilon>0$, the function $m$ satisfies
\[\left|m(t)-m(\infty)\right|=O(e^{-(\beta-\varepsilon)t}),\]
as $t\rightarrow \infty$.}
\begin{proof} From (\ref{xexp}) and Fubini's theorem, we obtain
\begin{eqnarray*}
m(t)&=&e^{-\gamma t}\mathbf{E}X(t)\\
&=&\sum_{i\in\Sigma_*} e^{-\gamma t}\mathbf{E}\eta_i(t+\gamma^{-1}\ln D_i)\\
&=&\sum_{i\in\Sigma_*} \int_0^\infty e^{-\gamma(t-s)}\mathbf{E}\eta_i(t-s)e^{-\gamma s}\mathbf{P}(-\gamma^{-1}\ln D_i\in ds)\\
&=& \int_0^\infty u(t-s)\sum_{i\in\Sigma_*}e^{-\gamma s}\mathbf{P}(-\gamma^{-1}\ln D_i\in ds).
\end{eqnarray*}
We will analyse the measure in this integral. Let $\lambda>0$, then
\begin{eqnarray*}
\int_0^\infty e^{-\lambda s} \sum_{i\in\Sigma_*}e^{-\gamma s}\mathbf{P}(-\gamma^{-1}\ln D_i\in ds) &=& \sum_{i\in\Sigma_*} \mathbf{E}D_i^{1+\lambda/\gamma}\\
&=&\sum_{n=0}^\infty \psi(1+\lambda\gamma^{-1})^n\\
&=& \frac{1}{1-\psi(1+\lambda\gamma^{-1})}.
\end{eqnarray*}
Furthermore, observe that $M:=(\int_0^\infty s\nu_\gamma(ds))^{-1}$ satisfies
\[M^{-1}=-\gamma^{-1}\psi'(1)=\frac{2\alpha-1}{\alpha-1}.\]
It follows that
\[\int_0^\infty e^{-\lambda s}\left[Mds-\sum_{i\in\Sigma_*}e^{-\gamma s}\mathbf{P}(-\gamma^{-1}\ln D_i\in ds)\right]=\frac{M}{\lambda}- \frac{1}{1-\psi(1+\lambda\gamma^{-1})}=-1,\]
and inverting this Laplace transform yields
\[Mds-\sum_{i\in\Sigma_*}e^{-\gamma s}\mathbf{P}(-\gamma^{-1}\ln D_i\in ds)=-\delta_0(s)ds,\]
where $\delta_0(s)$ is the Dirac delta function. Therefore
\begin{eqnarray*}
\lefteqn{m(\infty)-m(t)}\\
&=& M\int_0^\infty u(t+s)ds + \int_0^\infty u(t-s)\left[Mds-\sum_{i\in\Sigma_*}e^{-\gamma s}\mathbf{P}(-\gamma^{-1}\ln D_i\in ds)\right]\\
&=& M\int_0^\infty u(t+s)ds - u(t),
\end{eqnarray*}
and the result follows from Lemma \ref{lem:renconds}.
\end{proof}

Rewriting the above result in terms of $N^D$ and using (\ref{bracket}) to compare $N^D$ with $N$ yields Theorem \ref{mainthm}(a) for $\alpha\in(1,2)$. Before we conclude this section, though, let us briefly discuss the case $\alpha=2$, so as to explain how the corresponding parts of the theorem and Remark \ref{mainrem} can be verified. Letting $m$, $u$ and $\nu_\gamma$ be defined as in \cite{hamcroy} (which closely matches the notation of this article), then by repeating an almost identical argument to the previous proof, with the Poisson-Dirichlet random variables $(\Delta_i)_{i\in\mathbb{N}}$ of this article being replaced by the Dirichlet $(\frac{1}{2},\frac{1}{2},\frac{1}{2})$ triple of that, it is possible to show that
\[{m(\infty)-m(t)}= \int_0^\infty u(t+s)ds - u(t).\]
(To do this, it is necessary to apply the observations that, in the $\alpha=2$ setting, the constant $M$ is equal to 1, and the function corresponding to $\psi(x)$ can be computed to be $3(2x+1)^{-1}$.) Since $u$ was shown in \cite{hamcroy} to satisfy $u(t)\leq Ce^{-2t/3}$ for $t\geq 0$, the right-hand side is bounded by a constant when multiplied by $e^{2t/3}$, and it follows that Theorem \ref{mainthm}(a) holds for $\alpha=2$ with the second order term reduced to $O(1)$.

\section{Almost-sure spectral asymptotics}\label{spectralsecas}

Our task for this section is to establish the $\mathbf{P}$-a.s. convergence of $e^{-\gamma t}X(t)$ as $t\rightarrow\infty$,
where $X(t)$ is defined as in the previous section and $\alpha\in(1,2)$ is fixed throughout. For this, we follow the branching
process argument of \cite{hamcroy}, which extends \cite{Hamasymp}, making changes where necessary to deal with the infinite
number of offspring. This approach relies on a second moment bound for $X(t)$, which we prove via a sequence of lemmas. For
brevity, we will henceforth write $\delta_i:= {\rm diam}_{d_{\mathcal{T}_i}}({\mathcal{T}_i})$.  It will also be convenient to let
$m(i,j) = \sup\{n:i|_n=j|_n\}$ be the generation of the most recent
common ancestor of the addresses $i,j \in \Sigma_*$ and for $j=ik$ to write
$D_j^i = \prod_{l=|i|+1}^{|j|} \Delta_{j|_l}$.

We first state an elementary extension of Markov's inequality.

\begin{lem}
Let $X,Y$ be positive random variables. Then for all $x,y>0$,
\begin{equation}
 \bp(X>x,Y>y) \leq \frac{1}{xy}\be XY. \label{eq:mi2}
\end{equation}
\end{lem}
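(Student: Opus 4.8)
The plan is to reduce the claim to a pointwise comparison of indicator functions followed by taking expectations, exactly as in the classical one-variable Markov inequality. The single observation that makes this work is that on the event $\{X>x\}\cap\{Y>y\}$, positivity of $X$ and $Y$ gives $XY>xy>0$, so that the random variable $(xy)^{-1}XY$ dominates the indicator of this event.

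Concretely, I would first note that if $\mathbf{E}(XY)=\infty$ there is nothing to prove, so we may assume $\mathbf{E}(XY)<\infty$. Then I would record the pointwise inequality
\[
\mathbf{1}_{\{X>x,\,Y>y\}}\leq \frac{XY}{xy},
\]
which holds surely: on the event in question the left side is $1$ and, since $X>x>0$ and $Y>y>0$, the right side exceeds $1$; off the event the left side is $0$ while the right side is non-negative because $X,Y>0$ and $x,y>0$. Taking expectations of both sides and using monotonicity of expectation yields
\[
\mathbf{P}(X>x,\,Y>y)=\mathbf{E}\,\mathbf{1}_{\{X>x,\,Y>y\}}\leq \frac{1}{xy}\,\mathbf{E}(XY),
\]
which is precisely \eqref{eq:mi2}.

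There is no real obstacle here; the only point requiring a word of care is the handling of the degenerate case $\mathbf{E}(XY)=\infty$ (trivial) and the justification that the displayed pointwise bound holds everywhere on the sample space, not merely on the target event — this is where the hypothesis that $X$ and $Y$ are positive (rather than merely non-negative) is used, ensuring the right-hand side is well-defined and non-negative. This lemma will subsequently be applied with $X$ and $Y$ taken to be terms of the form $X_i(t+\gamma^{-1}\ln D_i)$ indexed by distinct addresses, in order to control the cross terms in the second moment bound for $X(t)$.
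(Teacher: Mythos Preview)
Your proof is correct and is exactly the standard argument; the paper itself states this lemma without proof, merely calling it ``an elementary extension of Markov's inequality''. One small inaccuracy in your closing remark: in the paper the lemma is not applied to the variables $X_i(t+\gamma^{-1}\ln D_i)$ but rather in Lemma~\ref{lem:pijest} to products of the form $D_i^{1/\gamma}\delta_i$, in order to bound joint tail probabilities that feed into the second moment estimate.
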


\begin{lem}\label{lem:pijest}
For $i\in\Sigma_k$, $j\in\Sigma_l$ with $k\leq l$ and $\theta>0$, we have that
\begin{eqnarray*}
\lefteqn{\bp(D_i^{1/\gamma} \delta_i \geq e^{-t},D_j^{1/\gamma} \delta_j \geq e^{-t}) }\\
& \leq&  e^{2\theta t} (\be \delta^{4\theta})^{1/2}
(\be(\Delta_{i|_{m+1}}^{4\theta/\gamma})\be(\Delta_{j|_{m+1}}^{4\theta/\gamma}))^{1/4}
\be( D_{i|_m}^{2\theta/\gamma}) \be\left((D_i^{i|_{m+1}})^{\theta/\gamma} \right) \be\left((D_j^{j|_{m+1}})^{\theta/\gamma}\right)
\end{eqnarray*}
whenever $m<k$, and if $\varepsilon>0$, then
\[\bp(D_i^{1/\gamma} \delta_i \geq e^{-t},D_j^{1/\gamma} \delta_j \geq e^{-t})  \leq e^{2\theta t} \be (\delta^{2(1+\varepsilon^{-1})\theta})^{1/(1+\varepsilon^{-1})}\be (D_{i}^{2\theta/\gamma})  \be((D_j^i)^{(1+\varepsilon)\theta/\gamma})^{1/(1+\varepsilon)}\]
whenever $m=k$, where $m:=m(i,j)$ and $\delta:=\delta_\emptyset$.
\end{lem}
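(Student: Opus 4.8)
The plan is to bound both probabilities starting from the elementary two-variable Markov inequality (\ref{eq:mi2}), applied after raising everything to the power $\theta$, and then to reduce the resulting $\theta$-th moment to a product of marginal moments by peeling the address tree apart at the most recent common ancestor of $i$ and $j$ and invoking the recursive independence provided by Corollary \ref{recur}; Hölder's and the Cauchy--Schwarz inequality are then used to separate the remaining scalar moments, with the exponents chosen to land on exactly the powers in the statement. For the first step, note that $\mathbf{1}_{\{X\geq x\}}\leq X/x$ for positive $X$ and $x>0$, so (\ref{eq:mi2}) holds equally with $\geq$ in place of $>$; applying it to $(D_i^{1/\gamma}\delta_i)^\theta$ and $(D_j^{1/\gamma}\delta_j)^\theta$ at the level $e^{-\theta t}$ gives, in both cases,
\[\bp\left(D_i^{1/\gamma}\delta_i\geq e^{-t},\,D_j^{1/\gamma}\delta_j\geq e^{-t}\right)\leq e^{2\theta t}\,\be\left(D_i^{\theta/\gamma}\delta_i^\theta D_j^{\theta/\gamma}\delta_j^\theta\right),\]
and it remains only to estimate the moment on the right.

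In the case $m=k$, the address $i$ is an ancestor of $j$, so $D_j=D_iD_j^i$, and since $|i|=k$ Corollary \ref{recur} tells us that $((\mathcal{T}_i,d_{\mathcal{T}_i}),\mu_i,\rho_i,\sigma_i)$ is a copy of the original tree independent of $\mathcal{F}_k$, hence of $D_i$. As $\delta_i$, $D_j^i$ and $\delta_j$ are all functions of this copy (being diameters of, and mass products along, subtrees obtained by further decomposing $\mathcal{T}_i$), the moment factorises as $\be(D_i^{2\theta/\gamma})\,\be(\delta_i^\theta(D_j^i)^{\theta/\gamma}\delta_j^\theta)$. Applying Hölder's inequality to the second factor with the conjugate exponents $2(1+\varepsilon^{-1})$, $2(1+\varepsilon^{-1})$, $1+\varepsilon$ (conjugate since $\tfrac{1}{1+\varepsilon^{-1}}+\tfrac{1}{1+\varepsilon}=1$), assigning the first two to $\delta_i^\theta$ and $\delta_j^\theta$ and the last to $(D_j^i)^{\theta/\gamma}$, and then using $\delta_i\stackrel{d}{=}\delta_j\stackrel{d}{=}\delta$, produces the stated bound.

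In the case $m<k$ we have $i|_m=j|_m$ but $i|_{m+1}\neq j|_{m+1}$, and we write $D_i=D_{i|_m}\Delta_{i|_{m+1}}D_i^{i|_{m+1}}$ and $D_j=D_{i|_m}\Delta_{j|_{m+1}}D_j^{j|_{m+1}}$. Corollary \ref{recur} at level $m+1$ shows that, conditionally on $\mathcal{F}_{m+1}$, the tuples attached to $i|_{m+1}$ and to $j|_{m+1}$ are independent copies of the original tree; since $\delta_i$ and $D_i^{i|_{m+1}}$ are functions of the first, $\delta_j$ and $D_j^{j|_{m+1}}$ of the second, while $D_{i|_m}$, $\Delta_{i|_{m+1}}$ and $\Delta_{j|_{m+1}}$ are $\mathcal{F}_{m+1}$-measurable, taking the conditional expectation given $\mathcal{F}_{m+1}$ and then integrating factorises the moment as
\[\be\left(D_{i|_m}^{2\theta/\gamma}\Delta_{i|_{m+1}}^{\theta/\gamma}\Delta_{j|_{m+1}}^{\theta/\gamma}\right)\,\be\left((D_i^{i|_{m+1}})^{\theta/\gamma}\delta_i^\theta\right)\,\be\left((D_j^{j|_{m+1}})^{\theta/\gamma}\delta_j^\theta\right).\]
In the first factor $D_{i|_m}$ is independent of $(\Delta_{i|_{m+1}},\Delta_{j|_{m+1}})$ — the latter being two coordinates of the Poisson--Dirichlet sequence obtained by decomposing $\mathcal{T}_{i|_m}$, which by Corollary \ref{recur} at level $m$ is independent of $\mathcal{F}_m$ — so $\be(D_{i|_m}^{2\theta/\gamma})$ splits off and the residual $\be(\Delta_{i|_{m+1}}^{\theta/\gamma}\Delta_{j|_{m+1}}^{\theta/\gamma})$ is at most $\be(\Delta_{i|_{m+1}}^{4\theta/\gamma})^{1/4}\be(\Delta_{j|_{m+1}}^{4\theta/\gamma})^{1/4}$ by two applications of Cauchy--Schwarz; in the second and third factors $\delta_i$ is independent of $D_i^{i|_{m+1}}$ within the $i|_{m+1}$-copy (and similarly for $j$), so these equal $\be((D_i^{i|_{m+1}})^{\theta/\gamma})\be(\delta^\theta)$ and $\be((D_j^{j|_{m+1}})^{\theta/\gamma})\be(\delta^\theta)$, with each $\be(\delta^\theta)\leq\be(\delta^{4\theta})^{1/4}$ by Hölder. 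Multiplying and combining the two diameter moments into $\be(\delta^{4\theta})^{1/2}$ gives the bound for $m<k$.

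The argument is essentially bookkeeping, and the only point requiring genuine care is keeping track of which $\sigma$-field each of $D_{i|_m}$, $\Delta_{i|_{m+1}}$, $\Delta_{j|_{m+1}}$, $\delta_i$, $D_i^{i|_{m+1}}$ and their $j$-counterparts is measurable with respect to, so that every use of Corollary \ref{recur} to split an expectation is legitimate; selecting the Hölder and Cauchy--Schwarz exponents is then routine. (The powers $4\theta$ and $4\theta/\gamma$ are deliberately more generous than necessary — for instance the $m=k$ bound could be proved with $e^{\theta t}$ and a single $\be(\delta^\theta)$, using $1/\gamma+(1-\alpha)/\alpha=1$ — the point being that the later summation over $i$, $j$ and $m$ should require only finitely many moment conditions.)
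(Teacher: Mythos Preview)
Your proof is correct and follows essentially the same approach as the paper's: the two-variable Markov inequality (\ref{eq:mi2}), the recursive independence of Corollary \ref{recur} to split off factors at the most recent common ancestor, and Cauchy--Schwarz/H\"older to land on the stated exponents. The only real difference is the order of operations: you apply (\ref{eq:mi2}) immediately to reduce to a single joint moment and then factorise, whereas the paper first conditions on $x_i=e^{-t}D_{i|_{m+1}}^{-1/\gamma}\delta_i^{-1}$, $x_j$, uses independence of $D_i^{i|_{m+1}}$ and $D_j^{j|_{m+1}}$ to split the conditional probability, and only then applies Markov to each piece; this forces the paper to keep $\delta_i,\delta_j,\Delta_{i|_{m+1}},\Delta_{j|_{m+1}}$ together in one expectation and to treat the same-parent subcase $k=l$, $m=k-1$ separately. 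Your ``Markov first'' route, together with the extra observation that $\delta_i$ is independent of $D_i^{i|_{m+1}}$ (since the former depends only on the level-$k$ copy $\mathcal{T}_i$, which by Corollary \ref{recur} is independent of $\mathcal{F}_k$, while the latter is $\mathcal{F}_k$-measurable), handles all of $m<k$ uniformly and is a little cleaner.
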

\begin{proof} We start by assuming $m<k$ or, if $k=l$, then $m<k-1$.
By definition, we have that
\begin{eqnarray}
\lefteqn{\bp(D_i^{1/\gamma} \delta_i \geq e^{-t},D_j^{1/\gamma}\delta_j \geq e^{-t})}\nonumber\\
&=& \bp(D_{i|_{m+1}}^{1/\gamma} (D_i^{i|_{m+1}})^{1/\gamma}\delta_i \geq
e^{-t},D_{j|_{m+1}}^{1/\gamma} (D_j^{j|_{m+1}})^{1/\gamma}\delta_j\geq e^{-t}) \nonumber \\
&=& \be(\bp((D_i^{i|_{m+1}})^{1/\gamma} \geq x_i, (D_j^{j|_{m+1}})^{1/\gamma}\geq x_j|x_i,x_j)) \label{eq:stage1}
\end{eqnarray}
where $x_i^{-1} = e^t D_{i|_{m+1}}^{1/\gamma} \delta_i,x_j^{-1} = e^t D_{j|_{m+1}}^{1/\gamma}
\delta_j$. Now, as $D_i^{i|_{m+1}}$ and $D_j^{j|_{m+1}}$ are independent, we have
\begin{eqnarray*}
&&\bp(D_i^{1/\gamma}\delta_i \geq
e^{-t},D_j^{1/\gamma}\delta_j \geq e^{-t})\\
&& \qquad\leq \be(\bp((D_i^{i|_{m+1}})^{1/\gamma} \geq x_i|x_i,x_j)\bp( (D_j^{j|_{m+1}})^{1/\gamma} \geq x_j|x_i,x_j)) \\
&& \qquad \leq \be(x_i^{-\theta}x_j^{-\theta} \be((D_i^{i|_{m+1}})^{\theta/\gamma} |x_i,x_j)\be(
(D_j^{j|_{m+1}})^{\theta/\gamma}|x_i,x_j)) \\
&& \qquad \leq e^{2\theta t} \be(\delta_i^\theta\delta_j^{\theta}\Delta_{i|_{m+1}}^{\theta/\gamma}\Delta_{j|_{m+1}}^{\theta/\gamma})
\be(D_{i|_m}^{2\theta/\gamma}) \be((D_i^{i|_{m+1}})^{\theta/\gamma})\be((D_j^{j|_{m+1}})^{\theta/\gamma})
\end{eqnarray*}
A repeated application of Cauchy-Schwarz to $\be(\delta_i^{\theta}\delta_j^{\theta}\Delta_{i|_{m+1}}^{\theta/\gamma}
\Delta_{j|_{m+1}}^{\theta/\gamma})$ then gives the result.

For the case where $k=l$ and $m=k-1$, that is $i,j$ have the same parent we cannot use independence in
the same way and instead use (\ref{eq:mi2}) in (\ref{eq:stage1}) to get
\begin{eqnarray*}
\bp(D_i^{1/\gamma}\delta_i \geq
e^{-t},D_j^{1/\gamma}\delta_j \geq e^{-t})
&\leq& \be(x_i^{-\theta}x_j^{-\theta} \be(\Delta_{i|_k}^{\theta/\gamma} \Delta_{j|_k}^{\theta/\gamma}|x_i,x_j)) \\
&\leq& e^{2\theta t} \be(\delta_i^\theta\delta_j^{\theta}\Delta_{i|_k}^{\theta/\gamma}\Delta_{j|_k}^{\theta/\gamma})
\be(D_{i|_m}^{2\theta/\gamma})
\end{eqnarray*}
and Cauchy-Schwarz again gives the result.

For the case where $m=k$ we have by (\ref{eq:mi2}) that
\begin{eqnarray*}
&&\bp(D_i^{1/\gamma} \delta_i \geq e^{-t},D_i^{1/\gamma} (D_j^i)^{1/\gamma}\delta_j \geq e^{-t})\\
&& \qquad\leq e^{2\theta t} \be\left( D_i^{2\theta/\gamma}\delta_i^\theta(D^i_j)^{\theta/\gamma}\delta_j^\theta\right)\\
&& \qquad=e^{2\theta t} \be(D_i^{2\theta/\gamma})\be\left(\delta_i^\theta(D^i_j)^{\theta/\gamma}\delta_j^{\theta}\right)
\end{eqnarray*}
Applying H\"{o}lder twice to $\be\left(\delta_i^\theta(D^i_j)^{\theta/\gamma}\delta_j^{\theta}\right)$, we have the result in this case as well.
\end{proof}

For the following result, we define $\psi_r:=\psi(r\theta\gamma^{-1})$ for $r=1,2$, where the function $(\psi(x))_{x\geq 0}$ was introduced at (\ref{phi}). We also set
\[\psi_{1,\varepsilon}:=\sum_{i\in\mathbb{N}}\be(\Delta_i^{(1+\varepsilon)\theta/\gamma})^{1/(1+\varepsilon)}.\]
If $\theta>\gamma/\alpha$, we observe that $\psi_1\leq \psi_{1,\varepsilon}<\infty$, where the lower inequality is simply Jensen's and the upper inequality is a consequence of \cite{PY}, equation (50).

\begin{lem}\label{etaijest} For $k\leq l$, $\theta>\gamma/\alpha$ and $\varepsilon>0$, we have that
\[\sum_{i\in\Sigma_k} \sum_{j\in\Sigma_l} \be(\eta_i(t+\gamma^{-1}\ln D_i)\eta_j(t+\gamma^{-1}\ln D_j)) \leq C e^{2\theta t}
(k+1)\psi_{1,\varepsilon}^{k+l}\left(\frac{\psi_2}{\psi_{1,\varepsilon}^2}\vee 1\right)^k\]
for some finite constant $C$.
\end{lem}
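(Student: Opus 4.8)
The plan is to bound the product $\be(\eta_i(t+\gamma^{-1}\ln D_i)\eta_j(t+\gamma^{-1}\ln D_j))$ using the pointwise estimate (\ref{etabound}) for $\eta_i$ and then sum. Recall from (\ref{etabound}) that $\eta_i(t+\gamma^{-1}\ln D_i)$ is bounded above by $\mathbf{1}_{\{D_i^{1/\gamma}\delta_i\geq e^{-t}\}}+\sum_{a\in\mathbb{N}}\mathbf{1}_{\{D_{ia}^{1/\gamma}\delta_{ia}\geq e^{-t}\}}$, i.e. by a sum of indicators of events of the form $\{D_u^{1/\gamma}\delta_u\geq e^{-t}\}$ over addresses $u$ that are either $i$ itself or a child of $i$ (and similarly for $j$). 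Expanding the product of the two bounds, $\eta_i(\cdot)\eta_j(\cdot)$ is dominated by a sum of at most (number of children)$^2$ terms of the form $\mathbf{1}_{\{D_u^{1/\gamma}\delta_u\geq e^{-t}\}}\mathbf{1}_{\{D_v^{1/\gamma}\delta_v\geq e^{-t}\}}$ with $|u|\in\{k,k+1\}$, $|v|\in\{l,l+1\}$. Taking expectations, each such term is exactly the quantity estimated in Lemma \ref{lem:pijest}.

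Next I would organise the double sum $\sum_{i\in\Sigma_k}\sum_{j\in\Sigma_l}$ according to the generation $m=m(i,j)$ of the most recent common ancestor, which ranges over $0\leq m\leq k$. For a fixed value of $m<k$, I plug in the first bound of Lemma \ref{lem:pijest}: the relevant factors are $\be(D_{i|_m}^{2\theta/\gamma})$, which summed over the common ancestor $i|_m\in\Sigma_m$ contributes $\psi_2^m$ by the recursive independence of Corollary \ref{recur} and the definition $\psi_2=\psi(2\theta\gamma^{-1})$; the factor $\be((D_i^{i|_{m+1}})^{\theta/\gamma})$ summed over the part of $i$ below its common ancestor contributes $\psi_1^{k-m-1}$, and likewise $\be((D_j^{j|_{m+1}})^{\theta/\gamma})$ summed contributes $\psi_1^{l-m-1}$; the factors $\be(\Delta_{i|_{m+1}}^{4\theta/\gamma})^{1/4}$ and $\be(\Delta_{j|_{m+1}}^{4\theta/\gamma})^{1/4}$, summed over the two distinct children of the common ancestor, are controlled using $\sum_a\be(\Delta_a^{4\theta/\gamma})^{1/4}\leq \psi_{1,\varepsilon}$ (via \cite{PY}, equation (50), as in the remark preceding the lemma — here one uses $\theta>\gamma/\alpha$ to ensure finiteness); and $(\be\delta^{4\theta})^{1/2}$ is a finite constant since all positive moments of the diameter are finite (\cite{Haas}, Proposition 14). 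The bookkeeping then yields a contribution of order $e^{2\theta t}\psi_2^m\psi_{1,\varepsilon}^{k+l-2m}$ for each $m<k$, and the terms coming from children (where $|u|=k+1$ or $|v|=l+1$) only change $k,l$ by at most one, absorbed into the constant $C$ and a shift in exponents. The $m=k$ case is handled identically using the second bound of Lemma \ref{lem:pijest}, and contributes a term of the same shape with $m=k$.

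Summing over $m$ from $0$ to $k$, the total is bounded by $C e^{2\theta t}\sum_{m=0}^k \psi_2^m\psi_{1,\varepsilon}^{k+l-2m}= C e^{2\theta t}\psi_{1,\varepsilon}^{k+l}\sum_{m=0}^k(\psi_2/\psi_{1,\varepsilon}^2)^m$. Since a geometric-type sum of $k+1$ terms with ratio $\rho$ is bounded by $(k+1)(\rho\vee 1)^k$, this gives precisely $Ce^{2\theta t}(k+1)\psi_{1,\varepsilon}^{k+l}(\psi_2/\psi_{1,\varepsilon}^2\vee 1)^k$, as claimed.

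The main obstacle I anticipate is the careful combinatorial bookkeeping in the $m<k$ case: one must split each address $i$ at its common ancestor with $j$ into the common part ($i|_m$, of length $m$), the branching child ($i|_{m+1}$), and the remaining tail ($D_i^{i|_{m+1}}$, of length $k-m-1$), check that these three pieces of $i$ and the corresponding three pieces of $j$ factorise into independent contributions under $\mathbf{P}$ (which is exactly what Corollary \ref{recur} provides), and verify that the distinct-children constraint at generation $m+1$ is what forces the use of the $\psi_{1,\varepsilon}$ bound rather than $\psi_1$ for those two factors — getting the exponents $k+l-2m$ on $\psi_{1,\varepsilon}$ and $m$ on $\psi_2$ exactly right, and correctly absorbing the children terms from (\ref{etabound}) without disturbing the final exponents, is where the care is needed. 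The analytic inputs (finiteness of diameter moments and of $\psi_{1,\varepsilon}$) are routine given the cited results.
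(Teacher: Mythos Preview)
Your approach is essentially the paper's: expand via (\ref{etabound}), apply Lemma~\ref{lem:pijest} term by term, stratify the double sum by the generation $m$ of the most recent common ancestor, and bound the resulting geometric-type sum over $m$. The paper carries this out with the same case split ($m<k$ versus $m=k$, i.e.\ $i$ an ancestor of $j$) and the same treatment of the child terms $I_1,I_2,I_3$.

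One correction: your claimed inequality $\sum_a\be(\Delta_a^{4\theta/\gamma})^{1/4}\leq \psi_{1,\varepsilon}$ is false for small $\varepsilon$, since by $L^p$-norm monotonicity $(\be\Delta_a^{4\theta/\gamma})^{1/4}\geq(\be\Delta_a^{(1+\varepsilon)\theta/\gamma})^{1/(1+\varepsilon)}$ whenever $\varepsilon<3$. The paper handles this differently: it simply observes that $\sum_a\be(\Delta_a^{4\theta/\gamma})^{1/4}$ is a finite constant (by \cite{PY}, equation~(50), since $4\theta/\gamma>4/\alpha>1/\alpha$) and absorbs it into $C$, obtaining $C\psi_2^{m}\psi_1^{k+l-2m-2}$ for the $m<k$ contribution. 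After replacing $\psi_1$ by the larger $\psi_{1,\varepsilon}$ and summing over $m$, this still yields the stated bound. With this adjustment your argument goes through and matches the paper.
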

\begin{proof} Let $i\in\Sigma_k,j\in\Sigma_l$ for some $k\leq l$, then (\ref{etabound}) implies that
\begin{eqnarray}
 \lefteqn{\be(\eta_i(t+\gamma^{-1}\ln D_i)\eta_j(t+\gamma^{-1}\ln D_j))}\nonumber\\
  &\leq& \be(\mathbf{1}_{A_i,A_j} +
\mathbf{1}_{A_i} \sum_{n\in\bn} \mathbf{1}_{A_{jn}} + \mathbf{1}_{A_j} \sum_{n\in\bn} \mathbf{1}_{A_{in}}+
\sum_{n,n'\in\bn} \mathbf{1}_{A_{in},A_{jn'}} ) \nonumber\\
&=& \bp(A_i,A_j) + \sum_{n\in\bn} (\bp(A_i,A_{jn}) + \bp(A_j,A_{in})) + \sum_{n,n'\in\bn} \bp(A_{in},A_{jn'}),\label{aibound}
\end{eqnarray}
where $A_i:=\{D^{1/\gamma}_i\delta_i\geq e^{-t}\}$. We now apply Lemma~\ref{lem:pijest} to deduce that
\begin{eqnarray*}
\lefteqn{\sum_{i\in\Sigma_k} \sum_{j\in\Sigma_l:j|_k\neq i }\be(\eta_i(t+\gamma^{-1}\ln D_i)\eta_j(t+\gamma^{-1}\ln D_j))}\\
 &\leq& e^{2\theta t} (\be \delta^{4\theta})^{1/2}\sum_{i\in\Sigma_k} \sum_{j\in\Sigma_l:j|_k\neq i}
(\be(\Delta_{i|_{m+1}}^{4\theta/\gamma})\be(\Delta_{j|_{m+1}}^{4\theta/\gamma}))^{1/4}
\be( D_{i|_m}^{2\theta/\gamma}) (I_1+I_2+I_3),
\end{eqnarray*}
where $m:=m(i,j)$ is strictly less than $k$ for the $i$ and $j$ in the above sum, $\delta:=\delta_\emptyset$, and
\begin{eqnarray*}
I_1 &:=&  \be\left((D_i^{i|_{m+1}})^{\theta/\gamma}\right)\be\left((D_j^{j|_{m+1}})^{\theta/\gamma}\right)\\
I_2 &:=& \sum_{n\in\bn} \left(
\be\left((D_i^{i|_{m+1}})^{\theta/\gamma}\right)\be\left((D_{jn}^{j|_{m+1}})^{\theta/\gamma}\right) +
\be\left((D_{in}^{i|_{m+1}})^{\theta/\gamma}\right)\be\left((D_j^{j|_{m+1}})^{\theta/\gamma}\right)\right)
\\
I_3 &:=& \sum_{n,n'\in\bn} \be\left((D_{in}^{i|_{m+1}})^{\theta/\gamma}\right)\be\left((D_{jn'}^{j|_{m+1}})^{\theta/\gamma}\right).
\end{eqnarray*}
Noting as in the proof of Lemma \ref{lem:renconds} that $\be \delta^{4\theta}$ is finite, it will suffice to bound the sums over the terms involving $I_1$, $I_2$ and $I_3$. Firstly, we have that
\begin{eqnarray*}
\lefteqn{\sum_{i\in\Sigma_k} \sum_{j\in\Sigma_l:j|_k\neq i }(\be(\Delta_{i|_{m+1}}^{4\theta/\gamma})\be(\Delta_{j|_{m+1}}^{4\theta/\gamma}))^{1/4}
\be( D_{i|_m}^{2\theta/\gamma})I_1}\\
&\leq & \sum_{m'=0}^{k-1} \sum_{i'\in\Sigma_{m'}} \be(D_{i'}^{2\theta/\gamma}) \\ &&\times \sum_{i\in\Sigma_{k}:i|_{m'}=i'}\sum_{j\in\Sigma_{l}:j|_{m'}=i'}(\be(\Delta_{i|_{m'+1}}^{4\theta/\gamma})\be(\Delta_{j|_{m'+1}}^{4\theta/\gamma}))^{1/4}
\be\left((D_i^{i|_{m'+1}})^{\theta/\gamma}\right)\be\left((D_j^{j|_{m'+1}})^{\theta/\gamma}\right)\\
&\leq& C \sum_{m'=0}^{k-1} \psi_2^{m'}\psi_1^{k+l-2m'-2}\\
&\leq & Ck \psi_1^{k+l}\left(\frac{\psi_2}{\psi_1^2}\vee 1\right)^k ,
\end{eqnarray*}
where $C$ is a finite constant and we have applied \cite{PY}, equation (50) to deal with the $(m+1)$st generation terms. Similar calculations show that the analogous sums involving $I_2$ and $I_3$ can be bounded by the same expression after suitable modification of the constant.

We now consider the sum of $\be(\eta_i(t+\gamma^{-1}\ln D_i)\eta_j(t+\gamma^{-1}\ln D_j))$ over $i\in\Sigma_k$ and $j\in\Sigma_l$ in the case when $i$ is an ancestor of $j$. Again applying Lemma \ref{lem:pijest}, we deduce that
\begin{eqnarray}
\lefteqn{\sum_{i\in\Sigma_k} \sum_{j\in\Sigma_l:j|_k= i }\left(\bp(A_i,A_j) + \bp(A_{j|_{k+1}},A_{j})+ \sum_{n\in\bn}\left( \bp(A_i,A_{jn}) + \bp(A_{jn|_{k+1}},A_{jn})\right)\right)}\nonumber\\
 &\leq& e^{2\theta t}
 \be (\delta^{2(1+\varepsilon^{-1})\theta})^{1/(1+\varepsilon^{-1})}
 \sum_{i\in\Sigma_k}  \be (D_{i}^{2\theta/\gamma})\sum_{j\in\Sigma_l:j|_k= i} \left[\vphantom{\sum_{n\in\mathbb{N}} }
  \be((D_j^i)^{(1+\varepsilon)\theta/\gamma})^{1/(1+\varepsilon)}\right.\nonumber\\
 &&+ \be (\Delta_{j|_{k+1}}^{2\theta/\gamma})\be((D_j^{j|_{k+1}})^{(1+\varepsilon)\theta/\gamma})^{1/(1+\varepsilon)}
+\sum_{n\in\mathbb{N}} \be((D_{jn}^i)^{(1+\varepsilon)\theta/\gamma})^{1/(1+\varepsilon)}\nonumber\\
&&+ \left. \be(\Delta_{jn|_{k+1}}^{2\theta/\gamma})\sum_{n\in\mathbb{N}}\be((D_{jn}^{jn|_{k+1}})^{(1+\varepsilon)\theta/\gamma})^{1/(1+\varepsilon)}
\right]\nonumber\\
&\leq & Ce^{2\theta t} \psi_2^k \psi_{1,\varepsilon}^{l-k}.\label{b1}
\end{eqnarray}
Note that if $l=k$, then the first term involving $j|_{k+1}$ should be deleted from the above argument. Another appeal to Lemma \ref{lem:pijest} yields that we also have
\begin{equation}\label{b2}
\sum_{i\in\Sigma_k}\: \sum_{j\in\Sigma_l:j|_k= i }\: \sum_{n\in\mathbb{N}:in\neq j|_{k+1}}\left(\bp(A_{in},A_j)+\sum_{n'\in\mathbb{N}}\bp(A_{in},A_{jn'})\right)
\leq Ce^{2\theta t} \psi_2^k\psi_1^{l-k}.
\end{equation}
Summing (\ref{b1}) and (\ref{b2}), the bound at (\ref{aibound}) implies
\begin{equation}\label{b3}
\sum_{i\in\Sigma_k} \sum_{j\in\Sigma_l:j|_k=i }\be(\eta_i(t+\gamma^{-1}\ln D_i)\eta_j(t+\gamma^{-1}\ln D_j))  \leq Ce^{2\theta t} \psi_2^k \psi_{1,\varepsilon}^{l-k}.
\end{equation}
On combining our estimates, we obtain the lemma.
\end{proof}

We can now proceed with our second moment bound for $X(t)$.

\begin{lem}\label{lem:2ndmom} For $\theta>\gamma$, there is a finite constant $C$ such that
\[ \mathbf{E}(X(t)^2) \leq C e^{2\theta t}, \hspace{20pt}\forall t \in \mathbb{R}. \]
\end{lem}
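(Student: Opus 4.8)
The plan is to build the second-moment bound directly from the almost-sure expansion $X(t)=\sum_{i\in\Sigma_*}\eta_i(t+\gamma^{-1}\ln D_i)$ established at (\ref{xexp}). Squaring and applying Tonelli's theorem (legitimate since every summand is non-negative), one obtains
\[\mathbf{E}(X(t)^2)=\sum_{i,j\in\Sigma_*}\mathbf{E}\left(\eta_i(t+\gamma^{-1}\ln D_i)\eta_j(t+\gamma^{-1}\ln D_j)\right).\]
I would then split this sum according to the generations $k=|i|$ and $l=|j|$, and, using that the summand is symmetric under interchanging $i$ and $j$, bound it above by $2\sum_{k=0}^\infty\sum_{l=k}^\infty\sum_{i\in\Sigma_k}\sum_{j\in\Sigma_l}\mathbf{E}(\eta_i(t+\gamma^{-1}\ln D_i)\eta_j(t+\gamma^{-1}\ln D_j))$, the factor $2$ accounting for the two off-diagonal regimes $|i|<|j|$ and $|i|>|j|$.

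Next, fixing $\theta>\gamma$ and a small parameter $\varepsilon>0$ still to be chosen, I would feed each inner double sum into Lemma~\ref{etaijest} (noting $\theta>\gamma>\gamma/\alpha$, so the lemma applies), obtaining
\[\mathbf{E}(X(t)^2)\leq 2Ce^{2\theta t}\sum_{k=0}^\infty\sum_{l=k}^\infty(k+1)\,\psi_{1,\varepsilon}^{k+l}\left(\frac{\psi_2}{\psi_{1,\varepsilon}^2}\vee 1\right)^k.\]
Assuming $\psi_{1,\varepsilon}<1$, summing the geometric series over $l\geq k$ collapses this to a constant multiple of $e^{2\theta t}\sum_{k\geq 0}(k+1)(\psi_2\vee\psi_{1,\varepsilon}^2)^k$, since $\psi_{1,\varepsilon}^2\cdot(\psi_2\psi_{1,\varepsilon}^{-2}\vee 1)=\psi_2\vee\psi_{1,\varepsilon}^2$. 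This final sum is a finite multiple of $e^{2\theta t}$ precisely when $\psi_2\vee\psi_{1,\varepsilon}^2<1$, which would yield exactly the asserted bound $\mathbf{E}(X(t)^2)\leq Ce^{2\theta t}$.

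The remaining task, and the one I expect to be the main obstacle, is verifying the two inequalities $\psi_2<1$ and $\psi_{1,\varepsilon}<1$. Using the explicit formula $\psi(x)=(\alpha-1)/(\alpha x-1)$ valid for $x>\alpha^{-1}$, one checks that $\psi(x)<1$ if and only if $x>1$, so that $\psi_2=\psi(2\theta\gamma^{-1})<1$ and $\psi_1=\psi(\theta\gamma^{-1})<1$ whenever $\theta>\gamma$. The delicate point is that $\psi_{1,\varepsilon}\downarrow\psi_1$ as $\varepsilon\downarrow 0$; since $\psi_1$ is then strictly below $1$, this lets one fix $\varepsilon>0$ with $\psi_{1,\varepsilon}<1$. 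That convergence should follow from the explicit description of the moments $\mathbf{E}(\Delta_i^{\,\cdot\,})$ supplied by \cite{PY}, equation (50), together with its continuity in the exponent. Once this is in hand, the constant produced above is finite and independent of $t$, and everything else is routine bookkeeping with the Poisson--Dirichlet moment function $\psi$, the expansion (\ref{xexp}), and Lemma~\ref{etaijest}.
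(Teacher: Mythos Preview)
Your proposal is correct and follows essentially the same route as the paper: expand via (\ref{xexp}), square, reduce by symmetry to $2\sum_{k\leq l}$, invoke Lemma~\ref{etaijest}, and then check that the resulting double series converges because $\psi_2<1$ and $\psi_{1,\varepsilon}<1$ for small $\varepsilon$. The only cosmetic difference is that the paper justifies $\psi_{1,\varepsilon}\to\psi_1$ via the dominated convergence theorem rather than by appealing to the explicit moment formula from \cite{PY}.
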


\begin{proof}
This is a simple application of the preceding lemma. Firstly, applying (\ref{xexp}), we have that
\begin{eqnarray*}
{\be X(t)^2}&=& \be \left(\sum_{i,j\in \Sigma_*} \eta_i(t+\gamma^{-1}\ln D_i)\eta_j(t+\gamma^{-1}\ln D_j)\right),\\
&\leq& 2\sum_{k=0}^{\infty} \sum_{l=k}^{\infty}
\sum_{i\in\Sigma_k} \sum_{j\in\Sigma_l} \be\left( \eta_i(t+\gamma^{-1}\ln D_i)\eta_j(t+\gamma^{-1}\ln D_j)\right).
\end{eqnarray*}
From Lemma \ref{etaijest}, it follows that
\[ \be \left(X(t)^2\right) \leq C e^{2\theta t}\sum_{k=0}^{\infty} \sum_{l=k}^{\infty}
(k+1)\psi_{1,\varepsilon}^{k+l}\left(\frac{\psi_2}{\psi_{1,\varepsilon}^2}\vee 1\right)^k,
\]
for some finite constant $C$, which may depend on $\varepsilon>0$. Noting that, in the range of $\theta$ considered, $\psi_r<1$ for $r=1,2$ and $\psi_{1,\varepsilon}\rightarrow\psi_1$ as $\varepsilon\rightarrow 0$ (by the dominated convergence theorem), it is clear that the double sum is finite for suitably small $\varepsilon$. Thus the proof is complete.
\end{proof}

For the purposes of proving almost-sure convergence, we introduce the
following notation to represent a cut-set of $\Sigma_*$: for $t>0$,
\[\Lambda_t:=\{i\in\Sigma_*:\:-\gamma^{-1}\ln D_i\geq t > -\gamma^{-1}\ln D_{i|_{|i|-1}}\}.\] We
will also have cause to refer to the subset of $\Lambda_t$ defined by, for $t,c>0$,
\[\Lambda_{t,c}:=\{i\in\Sigma_*:\:-\gamma^{-1}\ln D_i\geq t+c,\: t> -\gamma^{-1}\ln
D_{i|_{|i|-1}}\}.\]
We note that the sets $\Lambda_t, \Lambda_{t,c}$ are countably infinite, but that $\Lambda_t\backslash \Lambda_{t,c}$
is a finite set $\bp$-a.s. The following is the main result of this section.

{\propn $\mathbf{P}$-a.s we have that
\[ e^{-\gamma t} X(t) \to m(\infty), \;\; \text{as } t\to\infty,\]
where $m(\infty)$ is the constant defined at (\ref{minf}).}
\begin{proof}
We follow the earlier proofs of such results which originate with \cite{Nerman}.  First, we
truncate the characteristics $\eta_i$ (this term is meant in the generalised sense of \cite{Nerman}, Section 7)
by defining, for fixed $c>0$,
$\eta^c_i(t):=\eta_i(t)\mathbf{1}_{\{t\leq n_0c\}}$, where $n_0$ is an integer that
will be chosen later in the proof. From these truncated characteristics construct the
processes $X_i^c$ as
\[X_i^c(t):=\sum_{j\in\Sigma_*}\eta_{ij}^c(t+\gamma^{-1}\ln (D_{ij}/D_i)),\]
and set $X^c:=X_\emptyset^c$. The corresponding discounted mean process is $m^c(t):=e^{-\gamma t}\mathbf{E}X^c(t)$,
and this may be checked to converge to $m^c(\infty)\in(0,\infty)$ as
$t\rightarrow\infty$ using the renewal theorem of \cite{Karlin}. From a branching process decomposition of $X^c$, we can deduce the following bound for $n_1\geq n_0$, $n\in\mathbb{N}$,
\[|e^{-\gamma c(n+n_1)}X^c(c(n+n_1))-m^c(\infty)|\leq S_1(n,n_1)+S_2(n,n_1)+S_3(n,n_1),\]
where,
\begin{eqnarray*}
\lefteqn{S_1(n,n_1):=}\\
&&\left|\sum_{i\in\Lambda_{cn}\backslash\Lambda_{cn,cn_1}} \left(e^{-\gamma c (n+n_1)} X_i^c(c(n+n_1)+\gamma^{-1}\ln D_i)-D_i m^c(c(n+n_1)+\gamma^{-1}\ln D_i)\right)\right|,
\end{eqnarray*}
\[S_2(n,n_1):=\left|\sum_{i\in\Lambda_{cn}\backslash\Lambda_{cn,cn_1}} D_i m^c(c(n+n_1)+\gamma^{-1}\ln D_i)-m^c(\infty)\right|,\]
\[S_3(n,n_1):=e^{- \gamma c(n+n_1) }\sum_{i\in\Lambda_{cn,cn_1}}X_i^c(c(n+n_1)+\gamma^{-1}\ln D_i).\]
For the first two terms we can
apply exactly the same argument as in \cite{Hamasymp} to deduce that, $\mathbf{P}$-a.s.,
\[\lim_{n_1\rightarrow\infty}\limsup_{n\rightarrow\infty} S_j(n,n_1)=0,\hspace{20pt}\mbox{for }j=1,2.\]
We will now show that $S_3(n,n_1)$ decays in a similar fashion.  We need to modify the approach of \cite{hamcroy}
slightly to deal with the infinite number of offspring. Firstly we introduce a set of characteristics, $\phi_i^{c,n_1}$, defined by
\[\phi_i^{c,n_1}(t):=\sum_{j \in \bn}X_{ij}(0)
\mathbf{1}_{\{t+cn_1+\ln\delta_{ij}\geq -\gamma^{-1}\ln \Delta_{ij}>t+cn_1, \:t>0\}},\]
where the bound involving $\delta_{ij}=\diatre$ is included to ensure that only a finite number of terms contribute to the sum. For $t>0$, set
\[Y^{c,n_1}(t):=\sum_{i\in\Sigma_*}\phi_i^{c,n_1}(t+\gamma^{-1}\ln D_i).\]
Note that from the definition of the cut-set $\Lambda_{cn,cn_1}$ we can deduce that
\[Y^{c,n_1}(cn)=\sum_{i\in\Lambda_{cn,cn_1}}X_i(0)\mathbf{1}_{\{c(n+n_1)+\gamma^{-1}\ln D_i \geq -\ln\delta_i\}}\geq e^{\gamma c (n+n_1)}
S_3(n,n_1),\]
where for the second inequality we apply the monotonicity of the $X_i$s and the fact that $X_i(t)=0$ for $t<\ln\delta_i^{-1}$. Now, $Y^{c,n_1}$ is a branching process with random characteristic $\phi^{c,n_1}_i$,
and we will proceed by checking that the conditions of the extension of \cite{Nerman}, Theorem 5.4, that is stated as \cite{Hamasymp}, Theorem 3.2,
are satisfied by it. There are two conditions, one on the characteristic, the other on the reproduction process.

For the reproduction process, it is enough
to show that there is a non-increasing, bounded positive integrable function $g$ such that $\int_0^{\infty} g(t)^{-1} \nu_{\gamma}(dt)<\infty$.
If we take $g(t)=1\wedge t^{-2}$, then by equation (6) of \cite{PY}, we see that
\[ \int_0^{\infty} (1\vee t^2) e^{-\gamma t} \nu(dt) \leq \mathbf{E}\sum_{i\in\bn} \Delta_i \left(1+ (\gamma^{-1} \ln \Delta_i)^2\right) < \infty. \]

For the characteristic, we need to prove the existence of a non-increasing, bounded positive integrable function $h$ such that $\mathbf{E} \sup_{t\geq0} e^{-\gamma t}\phi^{c,n_1}_\emptyset(t)/h(t) <\infty$.  Taking $h(t):=e^{-\beta t/2}$, where $\beta$ is the constant defined at (\ref{betadef}), we find that
\begin{eqnarray}
\sup_{t\in\mathbb{R}}\frac{e^{-\gamma t}\phi_{\emptyset}^{c,n_1}(t)}{h(t)}&\leq&e^{\left(\frac{\beta}{2}-\gamma\right)t}\sum_{i\in\mathbb{N}}X_i(0)\left(e^{t+cn_1}\delta_i\Delta_i^{1/\gamma}\right)^{\frac{1+\alpha}{2(2\alpha-1)}}\nonumber\\
&= & e^{cn_1(1+\alpha)/2(2\alpha-1)} \sum_{i\in\mathbb{N}}X_i(0)\delta_i^{\frac{1+\alpha}{2(2\alpha-1)}}\Delta_i^{\frac{1+\alpha}{2\alpha}}.\label{c1exp}
\end{eqnarray}
Thus it will suffice to prove that the final expression here has a finite first moment. Since $(X_i(0))_{i\in\mathbb{N}}$ and $(\delta_i)_{i\in\mathbb{N}}$ are independent of $(\Delta_i)_{i\in\mathbb{N}}$, we deduce that
\begin{equation}\label{c1exp2}
\mathbf{E}\left(\sum_{i\in\mathbb{N}}X_i(0)\delta_i^{\frac{1+\alpha}{2(2\alpha-1)}}\Delta_i^{\frac{1+\alpha}{2\alpha}}
\right)\leq \left(\mathbf{E}(X(0)^2)\mathbf{E}\left(\delta_\emptyset^{\frac{1+\alpha}{2\alpha-1}}\right)\right)^{1/2}\psi((1+\alpha)/2\alpha),
\end{equation}
where we have applied Cauchy-Schwarz to separate the expectations involving $\delta_\emptyset$ and $X(0)$. Now observe that, by Lemma \ref{lem:2ndmom}, $\mathbf{E}(X(0)^2)< \infty$,  the moments of the diameter of a $\alpha$-stable tree are finite and $\psi((1+\alpha)/2\alpha)<\infty$, which means that the condition on the characteristics is fulfilled.

Consequently, applying \cite{Hamasymp}, Theorem 3.2, we find that $\mathbf{P}$-a.s.,
\[e^{-\gamma t}Y^{c,n_1}(t)\rightarrow \frac{\int_0^\infty e^{-\gamma t}
 \mathbf{E}\phi^{c,n_1}_\emptyset(t)dt}{\int_0^{\infty}t\nu_\gamma(dt)}, \hspace{20pt}\mbox{as }t\rightarrow\infty.\]
By (\ref{c1exp}) and (\ref{c1exp2}), the above limit is bounded by $Ce^{cn_1(1+\alpha)/2(2\alpha-1)}$, where $C$ is a constant not depending on $n_1$. Hence, $\mathbf{P}$-a.s.,
\[\lim_{n_1\rightarrow\infty}\limsup_{n\rightarrow\infty}S_3(n,n_1)\leq \lim_{n_1\rightarrow\infty} Ce^{cn_1(1+\alpha)/2(2\alpha-1)}e^{-\gamma cn_1}=\lim_{n_1\rightarrow\infty} Ce^{- cn_1\beta/2}=0,\]
and combining the three limit results for $S_1$, $S_2$ and $S_3$, it is easy to deduce that $\mathbf{P}$-a.s.,
\begin{equation}
\lim_{n\rightarrow\infty}|e^{-\gamma cn}X^c(cn)-m^c(\infty)|=0. \label{eq:cnconv}
\end{equation}

We now show that the process $X$, when suitably scaled, converges along the subsequence $(cn)_{n\geq 0}$.
From (\ref{eq:cnconv}) we have that $\mathbf{P}$-a.s.,
\begin{equation}
\limsup_{n\rightarrow\infty}|e^{-\gamma cn}X(cn)-m(\infty)|\leq |m(\infty)-m^c(\infty)|+\limsup_{n\rightarrow\infty}
e^{-\gamma cn}|X(cn)-X^c(cn)|.\label{upper}
\end{equation}
Recall that the process $X^c$ and its discounted mean process $m^c$ depend on the integer $n_0$. By the dominated
convergence theorem, the first of the terms in (\ref{upper}), which is deterministic, converges to zero as
$n_0\rightarrow \infty$. To show the corresponding result for the second term, we start by introducing a collection of random variables $(U_i)_{i\in\Sigma_*}$ satisfying
\[U_i:=\sup_{t\in\mathbb{R}}\frac{e^{-\gamma t}\eta_i(t)}{h(t)},\]
where, similarly to above, $h(t):= e^{-\beta t/2}$. By applying ideas from the proof of Lemma \ref{lem:renconds}, it is an elementary exercise to check that $\mathbf{E}U_i<\infty$. Now, if we define characteristics $\phi_i(t):=U_i\mathbf{1}_{\{t\in[0,c]\}}$, then this finite integrability of $U_i$ readily implies the conditions of \cite{Hamasymp}, Theorem 3.2, which yields that, $\mathbf{P}$-a.s.,
\[e^{-\gamma t}\sum_{i\in\Sigma_*}\phi_i(t+\gamma^{-1} \ln D_i)\rightarrow \frac{\int_0^c e^{-\gamma t}
 \mathbf{E}U_idt}{\int_0^{\infty}t\nu_\gamma(dt)}, \hspace{20pt}\mbox{as }t\rightarrow\infty.\]
This we can rewrite as, $\mathbf{P}$-a.s.,
\[e^{-\gamma t}\sum_{i\in A_{t}\backslash A_{t-c}}U_i\rightarrow \frac{\int_0^c e^{-\gamma t}
 \mathbf{E}U_idt}{\int_0^{\infty}t\nu_\gamma(dt)}, \hspace{20pt}\mbox{as }t\rightarrow\infty,\]
where $A_t:=\{i\in\Sigma_*:\:-\gamma^{-1}\ln D_i \leq t\}$. Hence, we can proceed similarly to the proof of \cite{Nerman}, Lemma 5.8, to obtain that, $\mathbf{P}$-a.s., for $n> n_0$,
\begin{eqnarray*}
e^{-\gamma cn}|X(cn)-X^c(cn)|&=&e^{- \gamma cn}\sum_{i\in\Sigma_*}\eta_i(cn+\gamma^{-1}\ln D_i)\mathbf{1}_{\{cn+\gamma^{-1}\ln D_i > cn_0 \}}\\
&\leq & \sum_{i\in\Sigma_*} D_i U_i h(cn+\gamma^{-1} \ln D_i) \mathbf{1}_{\{i\in A_{c(n-n_0)}\}}\\
& \leq & U_{\emptyset}h(cn)+ \sum_{k=1}^{n-n_0}\sum_{i\in A_{ck}\backslash A_{c(k-1)}} D_i U_i h(c(n-k))\\
& \leq & U_{\emptyset}h(cn)+ \sum_{k=1}^{n-n_0} e^{-c((n-k)\beta/2+(k-1)\gamma)}
\sum_{i\in A_{ck}\backslash A_{c(k-1)}}  U_i\\
&\leq & U_{\emptyset} e^{-\beta c n/2}+ C \sum_{k=1}^{n-n_0} e^{-c(n-k)\beta/2}\\
&= & U_{\emptyset} e^{-\beta c n/2}+ C \sum_{k=n_0}^{\infty} e^{-ck\beta/2}.
\end{eqnarray*}
This yields in particular that, $\mathbf{P}$-a.s.,
\[\limsup_{n\rightarrow\infty}e^{-\gamma cn}|X(cn)-X^c(cn)|\leq C e^{-cn_0\beta/2}.\]
Consequently, by choosing $n_0$ suitably large, the upper bound in (\ref{upper}) can be made arbitrarily small, which has as a result that $e^{-\gamma cn}X(cn)\rightarrow m(\infty)$ as $n\rightarrow \infty$, $\mathbf{P}$-a.s., for each $c$. The proposition is readily deduced from this using the monotonicity of $X$.
\end{proof}

\section{The second order term}\label{secondordersec}

In this section we proceed to extend the result of the previous section so as to obtain an estimate on the second order term. We continue to assume that $\alpha\in(1,2)$, and recall from (\ref{betadef}) the definition of $\beta=(\alpha-1)/(2\alpha-1)$. In particular, in terms of the process $X(t) = N^D(e^t)$, it is our aim to prove the following proposition.

\begin{propn}\label{lem:2ndterm}
For each $\varepsilon>0$, in $\mathbf{P}$-probability, as $t\rightarrow\infty$,
\[ |e^{-\gamma t} X(t) - m(\infty)| = O(e^{-(\beta-\varepsilon)t}).\]
\end{propn}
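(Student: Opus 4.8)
The plan is to derive the result from a sharp second-moment estimate for $X(t)$ together with Chebyshev's inequality. Set $\hat{X}(t):=X(t)-e^{\gamma t}m(\infty)$ and $G(t):=\mathbf{E}(\hat{X}(t)^2)$. Since $\mathrm{Var}(X(t))\leq G(t)$ and, by Proposition \ref{propn:convrate}, $G(t)=\mathrm{Var}(X(t))+e^{2\gamma t}(m(t)-m(\infty))^2=\mathrm{Var}(X(t))+O(e^{2(\gamma-\beta+\varepsilon)t})$, it suffices to prove that for every $\varepsilon>0$ there is a constant $C$ with $G(t)\leq Ce^{2(\gamma-\beta+\varepsilon)t}$ for all $t\in\mathbb{R}$: indeed, applying this with $\varepsilon/2$ gives $\mathbf{E}((e^{(\beta-\varepsilon)t}(e^{-\gamma t}X(t)-m(\infty)))^2)=e^{2(\beta-\varepsilon-\gamma)t}G(t)=O(e^{-\varepsilon t})\to 0$, so that $e^{(\beta-\varepsilon)t}(e^{-\gamma t}X(t)-m(\infty))\to 0$ in $L^2(\mathbf{P})$ and hence the desired $O$-statement holds in $\mathbf{P}$-probability.

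The heart of the argument is a renewal-type inequality for $G$. From (\ref{evo}) and $\sum_{i\in\mathbb{N}}\Delta_i=1$ we get the self-similar identity $\hat{X}(t)=\eta(t)+\sum_{i\in\mathbb{N}}\hat{X}_i(s_i)$, where $s_i:=t+\gamma^{-1}\ln\Delta_i$ and $\hat{X}_i(s):=X_i(s)-e^{\gamma s}m(\infty)$. By Corollary \ref{recur} the $\hat{X}_i$ are i.i.d.\ copies of $\hat{X}$, independent of $(\Delta_j)_{j\in\mathbb{N}}$, with $\mathbf{E}(\hat{X}_i(s)\mid(\Delta_j)_j)=e^{\gamma s}(m(s)-m(\infty))$ and $\mathbf{E}(\hat{X}_i(s)^2\mid(\Delta_j)_j)=G(s)$; hence conditioning on $(\Delta_j)_j$ shows $\sum_{i\neq j}\mathbf{E}(\hat{X}_i(s_i)\hat{X}_j(s_j))=\mathbf{E}((\sum_i e^{\gamma s_i}(m(s_i)-m(\infty)))^2)-\mathbf{E}(\sum_i e^{2\gamma s_i}(m(s_i)-m(\infty))^2)$ and $\sum_{i}\mathbf{E}(\hat{X}_i(s_i)^2)=(TG)(t)$, where $(Tf)(t):=\sum_{i\in\mathbb{N}}\mathbf{E}(f(t+\gamma^{-1}\ln\Delta_i))$. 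Since rearranging the self-similar identity gives $\sum_{i}\eta(t)\hat{X}_i(s_i)=\eta(t)\hat{X}(t)-\eta(t)^2$, expanding $G(t)=\mathbf{E}(\hat{X}(t)^2)$ and then bounding $2\mathbf{E}(\eta(t)\hat{X}(t))\leq 2\mathbf{E}(\eta(t)^2)^{1/2}G(t)^{1/2}\leq\kappa G(t)+\kappa^{-1}\mathbf{E}(\eta(t)^2)$ (Cauchy--Schwarz and Young, any $\kappa\in(0,1)$) yields
\[ G(t)\leq \kappa^{-1}\mathbf{E}(\eta(t)^2)+(1-\kappa)^{-1}\mathbf{E}\big(\big(\textstyle\sum_{i\in\mathbb{N}} e^{\gamma s_i}(m(s_i)-m(\infty))\big)^2\big)+(1-\kappa)^{-1}(TG)(t). \]
(The interchanges of sum and expectation are justified by $\sum_i|\hat{X}_i(s_i)|\leq X(t)-\eta(t)+e^{\gamma t}m(\infty)$ and Lemma \ref{lem:2ndmom}.)

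To run the recursion I bound the two source terms at rate $e^{2(\gamma-\beta+\varepsilon)t}$, uniformly in $t\in\mathbb{R}$. For the first, $0\leq\eta(t)\leq\mathbf{1}_{\{\delta\geq e^{-t}\}}+\sum_{i\in\mathbb{N}}\mathbf{1}_{\{\Delta_i^{1/\gamma}\delta_i\geq e^{-t}\}}$ by (\ref{etabound}), (\ref{bracket}) and Lemma \ref{firste}; squaring, taking expectations and applying Markov's inequality in the form (\ref{eq:mi2}) with exponent $\theta>\gamma/\alpha=\gamma-\beta$, together with the finiteness of all positive moments of $\delta$ and the bounds of \cite{PY} for $\psi(\theta/\gamma)$ and $\mathbf{E}((\sum_i\Delta_i^{\theta/\gamma})^2)$, gives $\mathbf{E}(\eta(t)^2)\leq Ce^{2(\gamma-\beta+\varepsilon)t}$ for all $t$ (the key point being that the integrability threshold $\theta>\gamma-\beta$ is exactly the target exponent). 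For the second, Proposition \ref{propn:convrate} and boundedness of $m$ give $|m(s)-m(\infty)|\leq Ce^{-(\beta-\varepsilon)s}$ for all $s$, so $\sum_i e^{\gamma s_i}|m(s_i)-m(\infty)|\leq Ce^{(\gamma-\beta+\varepsilon)t}\sum_i\Delta_i^{1/\alpha+\varepsilon/\gamma}$, and $\mathbf{E}((\sum_i\Delta_i^{1/\alpha+\varepsilon/\gamma})^2)<\infty$ by \cite{PY}. Thus $G(t)\leq Ce^{2(\gamma-\beta+\varepsilon)t}+c_\kappa(TG)(t)$ with $c_\kappa:=(1-\kappa)^{-1}$. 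Now $\phi(t):=e^{2(\gamma-\beta+\varepsilon)t}$ satisfies $(T\phi)(t)=\psi(2(\gamma-\beta+\varepsilon)/\gamma)\phi(t)$ and $\psi(2(\gamma-\beta+\varepsilon)/\gamma)<\psi(2/\alpha)=\alpha-1<1$, while $\psi(2)=\beta<1$; choosing $\kappa$ small enough that $c_\kappa\psi(2(\gamma-\beta+\varepsilon)/\gamma)<1$ and $c_\kappa\psi(2)<1$ and iterating $n$ times gives $G(t)\leq C\phi(t)\sum_{k=0}^{n-1}(c_\kappa\psi(2(\gamma-\beta+\varepsilon)/\gamma))^k+c_\kappa^n(T^nG)(t)$. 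Using the a priori bound $G(s)\leq C(e^{2\gamma s}+e^{2\theta s})$ (for $\theta>\gamma$, from Lemma \ref{lem:2ndmom}), the remainder is at most $Cc_\kappa^n(e^{2\gamma t}\psi(2)^n+e^{2\theta t}\psi(2\theta/\gamma)^n)\to 0$ as $n\to\infty$, so $G(t)\leq C\phi(t)$, completing the proof.

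I expect the main obstacle to be obtaining the recursion with coefficient exactly $1$ on $(TG)$: a crude bound $(a+b)^2\leq 2a^2+2b^2$ would leave a factor $2$ (or $3$), and $2\psi(2/\alpha)<1$ only for $\alpha<3/2$, so it is essential to expand the square exactly, use the identity $\sum_i\eta(t)\hat{X}_i(s_i)=\eta(t)\hat{X}(t)-\eta(t)^2$ to collapse all cross-correlations into the single term $\mathbf{E}(\eta(t)\hat{X}(t))$, and absorb that term by Cauchy--Schwarz plus Young with $\kappa\downarrow 0$, so that the multiplier $c_\kappa$ is arbitrarily close to $1$; it is precisely the strict inequality $\psi(2/\alpha)=\alpha-1<1$ (i.e.\ $\alpha<2$) that provides the room to close the bootstrap. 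One must also check that all source-term estimates hold on the whole line (not merely as $t\to\infty$), which is what permits iterating the renewal inequality.
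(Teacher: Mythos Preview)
Your proof is correct and takes a genuinely different route from the paper's. The paper centres at $m(t)$ rather than $m(\infty)$, writes $Y(t)^2$ as an infinite sum over $\Sigma_*$ via the representation $Y(t)^2=\sum_{i\in\Sigma_*}D_i^2Z_i(t+\gamma^{-1}\ln D_i)$ (Lemma~5.2), and then bounds $\mathbf{E}(D_i^2Z_i)$ term by term, splitting into three pieces $I_1,I_2,I_3$ that are controlled using the detailed two-point probability estimates of Lemma~\ref{lem:pijest}. In particular the cross term $I_3$ is handled by re-expanding $Y_{ij}$ over $\Sigma_*$ and summing the resulting $\eta$-correlations directly.

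Your approach instead keeps the recursion at a single level and closes it by a bootstrap: you derive the renewal inequality $G\le C\phi+c_\kappa TG$ with the coefficient $c_\kappa=(1-\kappa)^{-1}$ on $TG$ made arbitrarily close to $1$ by the Cauchy--Schwarz/Young device, and then iterate, using only the crude a priori bound from Lemma~\ref{lem:2ndmom} to kill the remainder. This is more streamlined: it avoids the full $\Sigma_*$ expansion and, crucially, replaces the delicate $I_3$ computation by the single observation that $2\mathbf{E}(\eta\hat X)$ can be absorbed into $\kappa G$. The price is that the argument is less explicit and depends essentially on $\psi(2/\alpha)=\alpha-1<1$, so (as you note) it does not cover $\alpha=2$; the paper's direct estimates, by contrast, go through with only minor changes in that boundary case. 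Both proofs rely on the same ingredients---the $\eta$ indicator bound~(\ref{etabound}), the rate in Proposition~\ref{propn:convrate}, and the Poisson--Dirichlet second-moment formula from the appendix---so the real difference is organisational.
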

\bigskip

Let us start by introducing the notation $Y(t) := e^{-\gamma t} X(t) - m(t)$ for the rescaled and centred version of $X(t)$. Using the decomposition of $X$ given at (\ref{evo}), we have
\[ Y(t) = \zeta(t) + \sum_{i\in\mathbb{N}}
 \Delta_i Y_i(t+\gamma^{-1}\ln \Delta_i), \]
where
\[ \zeta(t)=e^{-\gamma t} (\eta(t)- \be\eta(t)) +
\sum_{i\in\mathbb{N}}(\Delta_im(t+\gamma^{-1}\ln \Delta_i)-\be(\Delta_im(t+\gamma^{-1}\ln \Delta_i))).\]
Hence
\begin{equation}
  Y(t)^2 =  Z(t) + \sum_{i\in\mathbb{N}} \Delta_i^2 Y_i(t+\gamma^{-1}\ln\Delta_i)^2, \label{eq:y2decomp}
\end{equation}
where
\[ Z(t) = \zeta^2(t) + 2\zeta(t)\sum_{i\in\mathbb{N}}\Delta_iY_i(t+\gamma^{-1}\ln\Delta_i) + \sum_{i,j\in\mathbb{N},i\neq j}
\Delta_i\Delta_j Y_i(t+\gamma^{-1}\ln\Delta_i)Y_j(t+\gamma^{-1}\ln\Delta_j). \]
Iterating (\ref{eq:y2decomp}), we have for any $k\in\bn$,
\[ Y(t)^2 = \sum_{|i|<k} D_i^2 Z_i(t+\gamma^{-1}\ln D_i) + \sum_{i\in\Sigma_k} D_i^2 Y_i(t+\gamma^{-1}\ln D_i)^2. \]
The following lemma shows that the value of the remainder term here converges to zero as $k\rightarrow \infty$, from which we obtain a useful decomposition of $Y(t)^2$.

\begin{lem}
We have, $\mathbf{P}$-a.s., that
\[ \lim_{k\to\infty}  \sum_{i\in\Sigma_k} D_i^2 Y_i(t+\gamma^{-1}\ln D_i)^2 = 0,\]
and hence we have the representation, $\mathbf{P}$-a.s.,
\begin{equation}
Y(t)^2 = \sum_{i\in\Sigma_*} D_i^2 Z_i(t+\gamma^{-1}\ln D_i),\hspace{20pt}\forall t\in\mathbb{R}.\label{eq:y2rep}
 \end{equation}
\end{lem}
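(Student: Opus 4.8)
The plan is to split each $Y_i$ into its rescaled eigenvalue-count part and its deterministic mean part, and to control these two contributions by different mechanisms: finite propagation (Lemma \ref{firste}) kills the eigenvalue-count part once the fragments are small, while boundedness of $m$ combined with $\psi(2)<1$ handles the mean part. First I would record that, taking expectations in the evolution equation (\ref{evo}) and using the conditional independence of Corollary \ref{recur}, one has $Y_i(s) = e^{-\gamma s}X_i(s) - m(s)$ for the same deterministic function $m$, so by Lemma \ref{lem:renconds}(a) there is $C_m<\infty$ with $|m(s)|\leq C_m$ for all $s\in\mathbb{R}$. Writing $s_i:=t+\gamma^{-1}\ln D_i$, using $(a-b)^2\leq 2a^2+2b^2$ and the identity $D_i^2e^{-2\gamma s_i}=e^{-2\gamma t}$, this gives
\[
\sum_{i\in\Sigma_k} D_i^2 Y_i(s_i)^2 \;\leq\; 2 e^{-2\gamma t}\sum_{i\in\Sigma_k} X_i(s_i)^2 \;+\; 2C_m^2 \sum_{i\in\Sigma_k} D_i^2.
\]

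For the second sum on the right, iterating Corollary \ref{recur} yields $\mathbf{E}\sum_{i\in\Sigma_k}D_i^2=\psi(2)^k$, and since $\psi(2)=(\alpha-1)/(2\alpha-1)<1$ by (\ref{phiequal}), Fubini's theorem gives $\sum_{k\geq 0}\sum_{i\in\Sigma_k}D_i^2<\infty$ $\mathbf{P}$-a.s., so in particular $\sum_{i\in\Sigma_k}D_i^2\to0$ $\mathbf{P}$-a.s. For the first sum, note that $X_i(s_i)=N_i^D(e^tD_i^{1/\gamma})$, which by Lemma \ref{firste} (using $\mu_i(\mathcal{T}_i)=1$) is zero whenever $D_i^{1/\gamma}\delta_i<e^{-t}$, where $\delta_i:=\diatrei$; and the Borel--Cantelli argument already carried out in the proof of Lemma \ref{lem:renconds} shows $\sup_{i\in\Sigma_k}D_i^{1/\gamma}\delta_i\to0$ $\mathbf{P}$-a.s. as $k\to\infty$. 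Hence, for $t$ restricted to an arbitrary compact interval $[-T,T]$, using monotonicity of $\lambda\mapsto N_i^D(\lambda)$ there is $\mathbf{P}$-a.s. a random $K<\infty$ with $\sum_{i\in\Sigma_k}X_i(t+\gamma^{-1}\ln D_i)^2=0$ for all $k\geq K$ and all $t\in[-T,T]$, so that for such $k$ the right-hand side of the displayed bound is at most $2C_m^2\sum_{i\in\Sigma_k}D_i^2\to0$. Since $T$ is arbitrary, the claimed limit holds simultaneously for all $t\in\mathbb{R}$, $\mathbf{P}$-a.s.

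Finally, letting $k\to\infty$ in the iterated identity $Y(t)^2=\sum_{|i|<k}D_i^2Z_i(t+\gamma^{-1}\ln D_i)+\sum_{i\in\Sigma_k}D_i^2Y_i(t+\gamma^{-1}\ln D_i)^2$ displayed just before the lemma shows that the partial sums $\sum_{|i|<k}D_i^2Z_i(t+\gamma^{-1}\ln D_i)$ converge, $\mathbf{P}$-a.s. and for all $t$, to $Y(t)^2$, which is (\ref{eq:y2rep}). The one point requiring care --- the main obstacle --- is that when a fragment $\mathcal{T}_i$ is too small the term $Y_i(s_i)=-m(s_i)$ does \emph{not} itself become small, so one cannot make $\sum_{i\in\Sigma_k}D_i^2Y_i(s_i)^2$ vanish via pointwise smallness of the summands; it is essential to exploit the geometric decay coming from $\psi(2)<1$ for the mean part while separately invoking finite propagation for the eigenvalue-count part, and to use the monotonicity of $N_i^D$ to upgrade the conclusion from ``for each fixed $t$'' to ``for all $t$ simultaneously''.
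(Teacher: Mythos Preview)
Your argument is correct and in fact takes a different, somewhat more elementary route than the paper's own proof. The paper proceeds by first invoking the second-moment estimate of Lemma~\ref{lem:2ndmom} together with the boundedness of $m$ to obtain $\mathbf{E}Y(t)^2\leq C(e^{2\varepsilon t}\vee 1)$, then bounds
\[
\mathbf{E}\sum_{i\in\Sigma_k}D_i^2Y_i(t+\gamma^{-1}\ln D_i)^2\leq C(e^{2\varepsilon t}\vee 1)\,\psi(2)^k,
\]
and applies Borel--Cantelli for each fixed $t$, followed by an extension to all $t$ via rationals and the cadlag property of $Y$. By contrast, you avoid Lemma~\ref{lem:2ndmom} entirely: your split $Y_i^2\leq 2D_i^{-2}e^{-2\gamma t}X_i^2+2m^2$ lets you kill the $X_i$ contribution by finite propagation (reusing the $\sup_{i\in\Sigma_k}D_i^{1/\gamma}\delta_i\to0$ argument already established in the proof of Lemma~\ref{lem:renconds}) and the $m$ contribution by the deterministic bound and $\psi(2)<1$; you then exploit the monotonicity of $N_i^D$ to get uniformity over compact $t$-intervals directly. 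The upshot is that your proof relies only on ingredients from Sections~\ref{stablesec}--\ref{spectralsec}, whereas the paper's is shorter only because it can draw on the second-moment bound from Section~\ref{spectralsecas}, which is in any case needed later for Lemma~\ref{ylem}.
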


\begin{proof}
From the second moment estimates of Lemma \ref{lem:2ndmom} and the boundedness of $m$ (see Lemma \ref{lem:renconds}), we have that
\[\be Y(t)^2 \leq 2\be e^{-2\gamma t} X(t)^2+2m(t)^2 \leq C\left(e^{2\varepsilon t}\vee 1\right).\]
Thus
\begin{eqnarray*}
 \be \sum_{i\in\Sigma_k} D_i^2 Y_i(t+\gamma^{-1}\ln D_i)^2 &\leq& \sum_{i\in\Sigma_k} C \be\left(D_i^2 \left(e^{2\varepsilon(t+\gamma^{-1}\ln D_i)}\vee 1\right)\right). \\
 &=&\sum_{i\in\Sigma_k} C \left(e^{2\varepsilon t} \vee 1\right)\be(D_i^{2}) \\
 &=& C \left(e^{2\varepsilon t} \vee 1\right) \psi(2)^k,
\end{eqnarray*}
where $\psi$ was defined at (\ref{phi}). As $\psi(2)<1$, we therefore have that
\[\sum_{k=0}^\infty\mathbf{P}\left(\sum_{i\in\Sigma_k} D_i^2 Y_i(t+\gamma^{-1}\ln D_i)^2 >\delta\right)\leq C\delta^{-1} \left(e^{2\varepsilon t} \vee 1\right) \sum_{k=0}^\infty   \psi(2)^k<\infty,\]
where we have applied Chebyshev to deduce the first inequality. Hence, Borel-Cantelli implies the representation of $Y(t)^2$ for each fixed $t$, $\mathbf{P}$-a.s. By countability, it follows that the same result holds for each rational $t$. Since $Y$ is cadlag, the representation can easily be extended to hold for all $t\in\mathbb{R}$.
\end{proof}

We use this result to derive a second moment estimate for $Y(t)$.

\begin{lem}\label{ylem} For each $\varepsilon>0$, there exists a constant $C$ such that
\[ \be Y(t)^2 \leq C e^{-(2\beta-\varepsilon)t}.\]
\end{lem}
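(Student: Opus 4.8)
The plan is to take expectations in the representation $(\ref{eq:y2rep})$, thereby reducing the lemma to a decay estimate for $\bar Z(s):=\mathbf{E}Z(s)$, and then to close that estimate by a short bootstrap. Write $W(t):=\sum_{i\in\mathbb{N}}\Delta_iY_i(t+\gamma^{-1}\ln\Delta_i)$. Since $Z_i$ is built from the decomposition of $\mathcal{T}_i$, Corollary $\ref{recur}$ shows it is independent of the $\mathcal{F}_{|i|}$-measurable quantity $D_i$ and has the same law as $Z$; so, Fubini's theorem being applicable thanks to the a priori bound $\mathbf{E}Y(t)^2\leq C(e^{2\varepsilon t}\vee1)$ recorded at the start of the proof of the previous lemma together with $\psi(2)<1$, $(\ref{eq:y2rep})$ gives
\[\mathbf{E}Y(t)^2=\sum_{i\in\Sigma_*}\mathbf{E}\bigl(D_i^2\,\bar Z(t+\gamma^{-1}\ln D_i)\bigr).\]
As $\bar Z$ vanishes for sufficiently negative argument (where $Y$ itself does) and is bounded on compacts, and as $\beta+\kappa<2\beta<\gamma$ for $\alpha\in(1,2)$ and any $\kappa<\beta$, a routine splitting of this sum (treating separately the indices $i$ with $t+\gamma^{-1}\ln D_i$ above and below a fixed level, and using $\psi(x)<1$ for $x>1$) shows that whenever $\bar Z$ is bounded and $\bar Z(s)=O(e^{-(\beta+\kappa-\varepsilon)s})$ one has $\mathbf{E}Y(t)^2=O(e^{-(\beta+\kappa-\varepsilon)t})$. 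It therefore remains to estimate $\bar Z$. Writing $Z(t)=\zeta^2(t)+2\zeta(t)W(t)+\sum_{i\neq j}\Delta_i\Delta_j\,Y_i(t+\gamma^{-1}\ln\Delta_i)Y_j(t+\gamma^{-1}\ln\Delta_j)$ and conditioning on $\mathcal{F}_1$ (given which the $Y_i$ are independent mean-zero copies of $Y$, independent of $\mathcal{F}_1$), the off-diagonal double sum has mean zero and the $\mathcal{F}_1$-measurable part of $\zeta$ contributes nothing to $\mathbf{E}[\zeta(t)W(t)]$, so
\[\bar Z(t)=\mathbf{E}\zeta^2(t)+2e^{-\gamma t}\,\mathbf{E}\bigl[\eta(t)W(t)\bigr],\qquad \eta:=\eta_\emptyset.\]

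The first estimate, which needs no bootstrap, is $\mathbf{E}\zeta^2(t)=O(e^{-(2\beta-\varepsilon)t})$ for every $\varepsilon>0$. Split $\zeta(t)=e^{-\gamma t}(\eta(t)-\mathbf{E}\eta(t))+\zeta^{(2)}(t)$ with $\zeta^{(2)}(t):=\sum_i\bigl(\Delta_im(t+\gamma^{-1}\ln\Delta_i)-\mathbf{E}(\cdots)\bigr)$, and bound the two variances separately. For the first, $\mathbf{E}[(e^{-\gamma t}(\eta(t)-\mathbf{E}\eta(t)))^2]\leq e^{-2\gamma t}\mathbf{E}\eta(t)^2$, and $(\ref{etabound})$ with $i=\emptyset$ gives $\eta(t)\leq1+\#\{j:\Delta_j^{1/\gamma}\delta_j\geq e^{-t}\}$; applying Markov's inequality to each indicator, the bound $(\ref{eq:mi2})$ to the pair terms, Cauchy--Schwarz together with \cite{PY}, equation (50), to estimate $\sum_{j\neq j'}\mathbf{E}(\Delta_j^{\theta/\gamma}\Delta_{j'}^{\theta/\gamma})\leq(\sum_j\mathbf{E}(\Delta_j^{2\theta/\gamma})^{1/2})^2$, and the finiteness of all moments of $\delta_\emptyset$ (via \cite{Haas}, as in Lemma $\ref{lem:renconds}$), one gets $\mathbf{E}\eta(t)^2\leq Ce^{2\theta t}$ for every $\theta>\gamma/\alpha$; since $\gamma-\gamma/\alpha=\beta$, this yields $e^{-2\gamma t}\mathbf{E}\eta(t)^2=O(e^{-(2\beta-\varepsilon)t})$. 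For the variance of $\zeta^{(2)}$, write $m=m(\infty)+(m-m(\infty))$ and use $\sum_i\Delta_i=1$ to discard the constant part; then $\mathrm{Var}\,\zeta^{(2)}(t)\leq\mathbf{E}[(\sum_i\Delta_i|m(t+\gamma^{-1}\ln\Delta_i)-m(\infty)|)^2]$, and Proposition $\ref{propn:convrate}$ together with the boundedness of $m$ gives $|m(s)-m(\infty)|\leq C\min(1,e^{-(\beta-\varepsilon)s})\leq Ce^{-u(\beta-\varepsilon)s}$ for $u\in(0,1)$, hence $\Delta_i|m(t+\gamma^{-1}\ln\Delta_i)-m(\infty)|\leq Ce^{-u(\beta-\varepsilon)t}\Delta_i^{1-u(\beta-\varepsilon)/\gamma}$; choosing $u$ close enough to $1$ that $1-u(\beta-\varepsilon)/\gamma>\alpha^{-1}$ makes $\mathbf{E}[(\sum_i\Delta_i^{1-u(\beta-\varepsilon)/\gamma})^2]$ finite (again by Cauchy--Schwarz and \cite{PY}), while $2u(\beta-\varepsilon)$ can be taken as close to $2\beta$ as desired.

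The remaining ingredient is the cross term, and here a bootstrap is forced upon us: because $\eta(t)=N^D(e^t)-\sum_jN_j^D(e^t\Delta_j^{1/\gamma})$ genuinely depends on the subtrees $\mathcal{T}_j$, hence on the very fluctuations $Y_j$ that make up $W(t)$, $\mathbf{E}[\eta(t)W(t)]$ need not vanish. Set $v(s):=\mathbf{E}Y(s)^2$ and suppose it is already known that $v(s)\leq C(e^{-2\kappa s}\vee1)$ for all $s$, for some $\kappa<\beta$; by the a priori bound this holds with $\kappa$ slightly negative, and our aim is to push $\kappa$ up to $\beta$. Since $Y_i$ is a copy of $Y$ independent of $\Delta_i$, $\mathbf{E}W(t)^2=\sum_{i\in\mathbb{N}}\mathbf{E}\bigl(\Delta_i^2\,v(t+\gamma^{-1}\ln\Delta_i)\bigr)$, and as $\psi(2-2\kappa/\gamma)<\infty$ for $\kappa<\beta$ this gives $\mathbf{E}W(t)^2=O(e^{-2\kappa t})$ as $t\to\infty$. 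Combined with $\mathbf{E}\eta(t)^2\leq Ce^{2\theta t}$ for $\theta$ just above $\gamma/\alpha$, Cauchy--Schwarz yields
\[\bigl|\mathbf{E}[\zeta(t)W(t)]\bigr|=e^{-\gamma t}\bigl|\mathbf{E}[\eta(t)W(t)]\bigr|\leq e^{-\gamma t}\bigl(\mathbf{E}\eta(t)^2\bigr)^{1/2}\bigl(\mathbf{E}W(t)^2\bigr)^{1/2}=O(e^{-(\beta+\kappa-\varepsilon)t}).\]
With the bound on $\mathbf{E}\zeta^2$ this gives $\bar Z(t)=O(e^{-(\beta+\kappa-\varepsilon)t})$, whence, by the reduction of the first paragraph, $v(t)=O(e^{-(\beta+\kappa-\varepsilon)t})$. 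Thus $v(\cdot)=O(e^{-2\kappa\,\cdot})$ improves to $v(\cdot)=O(e^{-(\beta+\kappa-\varepsilon)\,\cdot})$ for every $\varepsilon>0$; setting $\kappa^*:=\sup\{\kappa:v(t)=O(e^{-2\kappa t})\}$ (which is $\geq0$ by the a priori bound and corresponds to a downward-closed set of exponents), this forces $2\kappa^*\geq\beta+\kappa^*$, i.e. $\kappa^*\geq\beta$, so that $\mathbf{E}Y(t)^2=O(e^{-(2\beta-\varepsilon)t})$ for every $\varepsilon>0$, as claimed. (One could equivalently iterate the improvement $\kappa\mapsto(\beta+\kappa-\varepsilon)/2$, whose fixed point is $\beta-\varepsilon$.)

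The principal obstacle is exactly this non-vanishing of $\mathbf{E}[\zeta(t)W(t)]$: unlike the off-diagonal products of the $Y_i$, it cannot be eliminated, and the Cauchy--Schwarz estimate improves the decay exponent only from $2\kappa$ to $\beta+\kappa$, so the sharp exponent $2\beta$ is reached only in the limit of the iteration; this is also the reason the case $\alpha=2$, where $2\beta=\gamma$ and the relevant geometric sums become borderline, must be treated separately (cf. Remark $\ref{mainrem}$). The quantitative heart of the argument is the bound $\mathbf{E}\eta(t)^2\leq Ce^{2\theta t}$ valid for all $\theta>\gamma/\alpha$: it is the slack $\gamma-\theta$, which can be pushed up to $\beta$, that produces the exponent $\beta$ in the cross-term estimate, and a cruder moment bound on the characteristic $\eta$ would leave $\kappa^*$ strictly below $\beta$.
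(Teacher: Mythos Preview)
Your argument is correct and reaches the same conclusion as the paper, but the treatment of the cross term is genuinely different. Both proofs start from the representation $(\ref{eq:y2rep})$, kill the off--diagonal $Y_iY_j$ contribution by conditioning on $\mathcal{F}_1$, and bound $\mathbf{E}\zeta^2(t)$ directly; your handling of the two pieces of $\zeta$ matches the paper's $I_1$ and $I_2$ essentially line for line (the paper uses the appendix lemma for the off--diagonal part of the $(\sum_j\Delta_j^{1-(\beta-\varepsilon)/\gamma})^2$ moment rather than Cauchy--Schwarz, but the effect is the same). The divergence is at the cross term $e^{-\gamma t}\mathbf{E}[\eta(t)W(t)]$ (the paper's $I_3$). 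There the paper does \emph{not} close a recursion: it expands each $Y_{ij}$ back into a sum of $\eta$--characteristics via $(\ref{xexp})$, reducing $I_3$ to a double sum of $\mathbf{E}[\eta_i(\cdot)\eta_j(\cdot)]$ with $i$ an ancestor of $j$, and then applies a variant of Lemma~\ref{lem:pijest} with asymmetric exponents $\theta_1,\theta_2$ to bound this directly by $Ce^{-(2\beta-\varepsilon)t}$ in one shot. Your approach instead bounds the cross term by Cauchy--Schwarz as $e^{-\gamma t}(\mathbf{E}\eta(t)^2)^{1/2}(\mathbf{E}W(t)^2)^{1/2}$, feeds in the current bound on $v=\mathbf{E}Y^2$ through $\mathbf{E}W(t)^2=\sum_i\mathbf{E}[\Delta_i^2 v(t+\gamma^{-1}\ln\Delta_i)]$, and iterates $\kappa\mapsto(\beta+\kappa-\varepsilon)/2$ to its fixed point. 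This is more elementary---it avoids re--invoking the somewhat heavy correlation estimates of Lemmas~\ref{lem:pijest}--\ref{etaijest}, using only the single--characteristic bound $\mathbf{E}\eta(t)^2\leq Ce^{2\theta t}$ for $\theta>\gamma/\alpha$---at the cost of a bootstrap in place of a direct estimate. Both routes exploit the same key numerology $\gamma-\gamma/\alpha=\beta$; the paper's route gives the exponent in one pass, while yours makes transparent that the obstruction to a clean closed estimate is precisely the non--vanishing correlation between $\eta$ and the subtree fluctuations.
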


\begin{proof}
We need to estimate the terms on the right-hand side of (\ref{eq:y2rep}). Firstly, from the definition of $Z_i$,
conditioning on $\Delta_i$ and using $\be Y_i(t) =0$ we have
\begin{eqnarray*}
\lefteqn{\be D_i^2 Z_i(t+\gamma^{-1}\ln D_i)} \\
&=& \be D_i^2 \zeta_i(t+\gamma^{-1}\ln D_i)^2 + 2\be D_i^2 \zeta_i(t+\gamma^{-1}\ln D_i)\sum_{j\in\bn} \Delta_{ij} Y_{ij}(t+\gamma^{-1}\ln D_{ij}) \\
&\leq& 2\be \left(e^{-2\gamma t}D_i^2 (\eta_i(t+\gamma^{-1}\ln D_i)- \be(\eta_i(t+\gamma^{-1}\ln
 D_i)|D_i))^2\right)+2\be\left(\left(\sum_{j\in\mathbb{N}}\kappa_{ij}\right)^2\right)\\
& & \qquad + 2\be D_i^2 e^{-\gamma t} \eta_i(t+\gamma^{-1}\ln D_i)\sum_{j\in\bn} \Delta_{ij} Y_{ij}(t+\gamma^{-1}\ln D_{ij}),
\end{eqnarray*}
where we define $\kappa_{ij}:=D_{ij}m(t+\gamma^{-1}\ln D_{ij})-
\be(D_{ij}m(t+\gamma^{-1}\ln D_{ij})|D_i)$. Hence $\be Y(t)^2 \leq 2(I_1 + I_2 + I_3)$, where
\begin{eqnarray*}
 I_1 &=& \sum_{i\in\Sigma_*}\be \left(e^{-2\gamma t} D_i^2(\eta_i(t+\gamma^{-1}\ln D_i)- \be(\eta_i(t+\gamma^{-1}\ln D_i)|D_i))^2\right), \\
 I_2 &=& \sum_{i\in\Sigma_*}\be\left(\left(\sum_{j\in\mathbb{N}}\kappa_{ij}\right)^2\right), \\
I_3 &=& \sum_{i\in\Sigma_*}\be \left(D_i^2 e^{-\gamma t} \eta_i(t+\gamma^{-1}\ln D_i)\sum_{j\in\bn} \Delta_{ij} Y_{ij}(t+\gamma^{-1}\ln D_{ij}) \right).
 \end{eqnarray*}
For $I_1$, we apply Lemma \ref{lem:pijest} similarly to the proof of Lemma \ref{etaijest} to deduce that, for suitably chosen
$\theta>\gamma/\alpha$,
\begin{eqnarray*}
 I_1 &\leq& \sum_{k=0}^\infty \sum_{i\in\Sigma_k} e^{-2\gamma t} \be(\eta_i(t+\gamma^{-1}\ln D_i)^2) \\
 &\leq & C e^{-2\gamma t} e^{2\theta t} \sum_{k=0}^\infty \psi(2\theta\gamma^{-1})^k \\
 &= & C e^{-(2(\alpha-1)\gamma/\alpha-\varepsilon)t},
\end{eqnarray*}
which is a bound of the appropriate magnitude. For $I_2$, we use an extension of \cite{PY}, equation (6), coupled with the
estimate on the convergence rate of $m(t)$ to its limit. Specifically, we begin by writing $\kappa_{ij} = D_i A_j(t_i)$
where $t_i := t+\gamma^{-1}\ln D_i$ and
\[ A_j(t) := \Delta_j m(t+\gamma^{-1}\ln \Delta_j) - \be\Delta_j m(t+\gamma^{-1}\ln \Delta_j). \]
As $\sum_{j\in\bn} \Delta_j =1$, we can write
\[ \sum_{j\in\bn} A_j(t) = \sum_{j\in\bn} \left(\Delta_j \hm(t+\gamma^{-1}\ln \Delta_j) - \be\Delta_j
\hm(t+\gamma^{-1}\ln \Delta_j)\right),\]
where $\hm(t) := m(t)-m(\infty)$. By Proposition \ref{propn:convrate} and the boundedness of $m$ (Lemma \ref{lem:renconds}(a)),
there is a constant $C$ such that $|\hm(t)| = |m(t)-m(\infty)| \leq C e^{-(\beta-\varepsilon)t}$ for $t\in\br$, and hence
\[ \left| \sum_{j\in\bn} A_j(t))\right| \leq C \left( \sum_{j\in\bn} \Delta_j^{1-(\beta-\varepsilon)/\gamma} + \be\sum_{j\in\bn} \Delta_j^{1-(\beta-\varepsilon)/\gamma}\right) e^{-(\beta-\varepsilon)t} . \]
Now, to obtain our estimate, we note that
\begin{eqnarray*}
\lefteqn{\be\left(\left(\sum_{j\in\mathbb{N}}\kappa_{ij}\right)^2\right)}\\
 &= &  \be\left(D_i^2
    \be \left(\left(\sum_{j\in\mathbb{N}}A_j(t_i)\right)^2\vline D_i\right)\right) \\
    &\leq & C \be\left(D_i^2 \be \left( \left(\sum_{j\in\mathbb{N}} \Delta_j^{1-(\beta-\varepsilon)/\gamma}
    \right)^2 + \left(\be\sum_{j\in\mathbb{N}} \Delta_j^{1-(\beta-\varepsilon)/\gamma}\right)^2\right) D_i^{-2(\beta-\varepsilon)/\gamma} \right)e^{-2(\beta-\varepsilon)t}.
\end{eqnarray*}
Using the lemma in the appendix, and the fact that $1-\beta\gamma^{-1}=\alpha^{-1}$, we can compute the first term as follows:
\begin{eqnarray*}
 \be \left(\sum_{j\in\mathbb{N}} \Delta_j^{1-(\beta-\varepsilon)/\gamma} \right)^2 &=& \be\left( \sum_{j\in\mathbb{N}} \Delta_j^{2-2(\beta-\varepsilon)/\gamma} +
\sum_{j,l\in\mathbb{N}:j\neq l} \Delta_j^{1-(\beta-\varepsilon)/\gamma}\Delta_l^{1-(\beta-\varepsilon)/\gamma} \right)\\
&= & \psi(2-2(\beta-\varepsilon)\gamma^{-1}) + \frac{\Gamma(2-\alpha^{-1})^2}{(\varepsilon\gamma^{-1})^2\Gamma(1-\alpha^{-1})\Gamma(1+\alpha^{-1})}.
\end{eqnarray*}
Thus we obtain that
\[ \be\left(\left(\sum_{j\in\mathbb{N}}\kappa_{ij}\right)^2\right) \leq C \be D_i^{2/\alpha+\epsilon}e^{-2(\beta-\varepsilon)t}. \]
As $2/\alpha + \epsilon>1$ for $\alpha \in (1,2]$, this can be summed over $i\in\Sigma_*$ to give the bound
\[ I_2 \leq c e^{-2(\beta-\varepsilon)t}. \]
Finally, for $I_3$, we first observe that by (\ref{xexp}) and the definition of $Y(t)$ we can write
\[ Y(t) = e^{-\gamma t} \sum_{i\in \Sigma_*} \left(\eta_i(t+\gamma^{-1}\ln D_i)-\be\eta_i(t+\gamma^{-1}\ln D_i)\right). \]
Hence
\begin{eqnarray*}
 I_3 &\leq& e^{-2\gamma t}\sum_{i\in\Sigma_*}  \be \left(D_i^2 \eta_i(t+\gamma^{-1}\ln D_i)\sum_{j\in \Sigma_*\backslash\{\emptyset\}}\eta_{ij}(t+\gamma^{-1}\ln D_{ij})\right) \\
&\leq &e^{-2\gamma t} \sum_{k=0}^\infty\:\sum_{l=k+1}^\infty\:\sum_{i\in\Sigma_k}\:\sum_{j\in \Sigma_l:j|_k=i}\be \left( \eta_i(t+\gamma^{-1}\ln D_i)\eta_{j}(t+\gamma^{-1}\ln D_{j})\right).
\end{eqnarray*}
To bound the inner two sums, we follow the arguments of Section \ref{spectralsecas}, but taking different powers to those used there. For example, in the case when $i$ is an ancestor of $j$, we can replace the second statement of Lemma \ref{lem:pijest} by: for $\theta_1, \theta_2,\varepsilon'\geq 0$,
\[\bp(A_i\cap A_j)\leq Ce^{t(\theta_1+\theta_2)}\be\left(D_i^{(\theta_1+\theta_2)/\gamma}\right)\be\left((D^i_j)^{\theta_2(1+\varepsilon')/\gamma}\right)^{1/(1+\varepsilon')},\]
where $A_i$ is defined as in the proof of Lemma \ref{etaijest} and $C$ is a constant that depends only on $\theta_1, \theta_2$ and $\varepsilon'$. After proceeding similarly with the other relevant terms and taking $\theta_1:=(2\alpha^{-1}-1)\gamma$, $\theta_2:=\gamma+\varepsilon$, we are consequently able to show that (cf. (\ref{b3})): for any $\theta>\gamma/\alpha$,
\[\sum_{i\in\Sigma_k} \sum_{j\in\Sigma_l:j|_k=i }\be(\eta_i(t+\gamma^{-1}\ln D_i)\eta_j(t+\gamma^{-1}\ln D_j))  \leq Ce^{2\theta t} \psi_2^k \psi(1+\varepsilon\gamma^{-1},\varepsilon')^{l-k},\]
where, as previously, $\psi_2:=\psi(2\theta\gamma^{-1})$, and \[\psi(1+\varepsilon\gamma^{-1},\varepsilon'):=\sum_{i\in\bn}\be\left(\Delta_i^{(1+\varepsilon\gamma^{-1})(1+\varepsilon')}\right)^{1/(1+\varepsilon')}
\rightarrow \psi({1+\varepsilon\gamma^{-1}}),\]
as $\varepsilon'\rightarrow 0$. Since $\psi(1+\varepsilon\gamma^{-1}),\psi_2<1$, if $\varepsilon'$ is chosen small enough, we find from these results that
\[I_3\leq C e^{-(2\beta-\varepsilon)t} \sum_{k=0}^\infty\:\sum_{l=k+1}^\infty \psi_2^k \psi(1+\varepsilon\gamma^{-1},\varepsilon')^{l-k} \leq C e^{-(2\beta-\varepsilon)t},\]
as desired.
\end{proof}

Given this bound, it is now straightforward to prove the result of interest.

\begin{proof}[Proof of Proposition~\ref{lem:2ndterm}]
By Chebyshev and Lemma \ref{ylem}, there exists a $C$ such that for all $t\geq0$
\[ \bp(|Y(t)|>x) \leq x^{-2}\be(Y(t)^2) \leq x^{-2}C e^{-(2\beta -\varepsilon) t}. \]
Now choose $x = e^{-t(\beta-\varepsilon)}$ to see that
\[ \bp(|Y(t)|>e^{-t(\beta-\varepsilon)}) \leq Ce^{-\varepsilon t},\]
and hence we have the desired result in probability.
\end{proof}

To completely establish Theorem \ref{mainthm}, it remains to demonstrate that part (b) holds in the case $\alpha=2$.
However, since the appropriate first order asymptotic behaviour was already obtained in \cite{hamcroy} and the second
order term requires us to make only very minor changes to the above argument, we omit the proof of this part of the theorem.

Unfortunately, the arguments of this section are not enough to yield an almost-sure result regarding the size of second
order term in the asymptotic expansion of the eigenvalue counting function for $\alpha$-stable trees. By Borel-Cantelli,
the results we have proved so far would be good enough to show that for any $c>0$ it is $\bp$-a.s. the case that
\[ \limsup_{n\to\infty}|Y(nc)| e^{cn(\beta-\varepsilon)} \leq 1. \]
To extend this to all $t$ and establish that, $\bp$-a.s.,
\[ \limsup_{t\to\infty}|e^{-\gamma t}X(t)-m(t)| e^{-t(\beta-\varepsilon)} \leq C,\]
it would be enough to have moment estimates of the form
\[ \be Y(t)^k \leq C e^{-k(\beta-\varepsilon)t}, \]
for all $k\in \bn$. Although it appears that suitable extensions of the techniques used here would, after much effort,
yield such a result, we will leave such a calculation to an interested reader.
Finally, let us remark that, by analogy with the results known to hold for related branching processes, it might also be hoped that a
central limit theorem-type result of the following form holds, establishing the second order term for the eigenvalue counting
function of $\alpha$-stable trees.

\begin{conj} As $\lambda\rightarrow\infty$,
\[  \frac{N^D(\lambda)-m(\infty)\lambda^{\alpha/(2\alpha-1)}}{\lambda^{1/(2\alpha-1)}} \to Z_{\alpha},\;\;\text{in distribution,} \]
 where $Z_\alpha$ is an $\alpha$-stable random variable.
\end{conj}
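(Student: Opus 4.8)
The plan is to recast the conjecture as a limit theorem for the rescaled, centred fluctuation process. Writing $X(t)=N^D(e^t)$, $Y(t):=e^{-\gamma t}X(t)-m(t)$ and recalling that $1/(2\alpha-1)=\gamma-\beta$, the quantity appearing in the conjecture is, after the substitution $\lambda=e^t$ and up to the deterministic discrepancy $e^{\beta t}(m(t)-m(\infty))$, exactly $\tilde Y(t):=e^{\beta t}Y(t)$. The starting point is the recursive distributional equation for $Y$ from Section \ref{secondordersec}, namely $Y(t)=\zeta(t)+\sum_{i\in\mathbb{N}}\Delta_iY_i(t+\gamma^{-1}\ln\Delta_i)$, which, using the identity $1-\beta\gamma^{-1}=\alpha^{-1}$ already exploited in the proof of Lemma \ref{ylem}, rearranges into
\[ \tilde Y(t)=e^{\beta t}\zeta(t)+\sum_{i\in\mathbb{N}}\Delta_i^{1/\alpha}\,\tilde Y_i\!\left(t+\gamma^{-1}\ln\Delta_i\right). \]
Letting $t\to\infty$, this suggests that any distributional limit $W$ of $\tilde Y(t)$ must solve the inhomogeneous smoothing-transform fixed-point equation $W\stackrel{d}{=}\Xi+\sum_{i\in\mathbb{N}}\Delta_i^{1/\alpha}W_i$, where $(W_i)_{i\in\mathbb{N}}$ are i.i.d.\ copies of $W$, independent of $(\Delta_i)_{i\in\mathbb{N}}$, and $\Xi$ is a distributional limit of $e^{\beta t}\zeta(t)$. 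The decisive structural fact is that $\sum_{i\in\mathbb{N}}(\Delta_i^{1/\alpha})^\alpha=\sum_{i\in\mathbb{N}}\Delta_i=1$ almost surely, so that $\alpha$ is precisely the characteristic exponent of this smoothing transform; moreover $\tfrac{d}{ds}\psi(s/\alpha)\big|_{s=\alpha}=\psi'(1)/\alpha=-1/(\alpha-1)<0$, so the standard derivative condition holds. For a transform with characteristic exponent in $(1,2)$ the fixed points are (strictly) $\alpha$-stable laws, possibly skewed, and this is the algebraic mechanism underlying the conjecture. The same heuristic reads off directly from Proposition \ref{comparison}: inserting $N_i^D(\mu)\approx m(\infty)\mu^\gamma+\mu^{1/(2\alpha-1)}Z_\alpha^{(i)}$ into $N^D(\lambda)\approx\sum_iN_i^D(\lambda\Delta_i^{1/\gamma})$ gives $\lambda^{-1/(2\alpha-1)}(N^D(\lambda)-m(\infty)\lambda^\gamma)\approx\sum_i\Delta_i^{1/\alpha}Z_\alpha^{(i)}$, which has the same ($\alpha$-stable) law as each $Z_\alpha^{(i)}$.

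To make this rigorous I would proceed in four stages. First, sharpen the renewal analysis of Section \ref{spectralsec}: upgrade Proposition \ref{propn:convrate} to the exact order $m(t)-m(\infty)=O(e^{-\beta t})$ by pinning down the decay of $u(t)$ — which, from the Laplace-transform identity in that proof and the estimates of Lemma \ref{lem:renconds}(b), should itself be $\asymp e^{-\beta t}$ — and, strictly speaking, state the clean result for $\lambda^{-1/(2\alpha-1)}(N^D(\lambda)-m(\ln\lambda)\lambda^{\gamma})$, since re-centring at $m(\infty)$ can only introduce a bounded deterministic shift. Second, establish tightness of $(\tilde Y(t))_{t\geq0}$; note that the second-moment bound of Lemma \ref{ylem} is insufficient here, as it yields only $\mathbf{E}\tilde Y(t)^2\leq Ce^{\varepsilon t}$, so one must instead control a fractional moment $\mathbf{E}|Y(t)|^p$ of order $p\in(1,\alpha)$, with exponential rate $e^{-p\beta t}$. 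Third, identify the limit: feed the recursion and the tightness into the theory of fixed points of the smoothing transform at its characteristic exponent — in the vein of Durrett and Liggett, and its extensions by Liu, Biggins, Caliebe--R\"osler and Alsmeyer--Meiners to the two-sided and inhomogeneous settings — or, alternatively, argue directly with characteristic functions, showing that $\mathbf{E}\exp(iu\tilde Y(t))$ converges to the characteristic function of an $\alpha$-stable law. Fourth, control the toll term $\Xi$: since $\zeta(t)=\sum_i(\Delta_i\hm(t+\gamma^{-1}\ln\Delta_i)-\mathbf{E}(\cdot))+e^{-\gamma t}(\eta(t)-\mathbf{E}\eta(t))$ with $\hm=m-m(\infty)$, and the $\eta$-contribution is of lower order, one has to show, via the representation of the Poisson--Dirichlet$(\alpha^{-1},1-\alpha^{-1})$ masses as normalised jumps of an $\alpha^{-1}$-stable subordinator, that $e^{\beta t}\zeta(t)$ converges and that its limit is compatible with (does not destroy) the $\alpha$-stable fixed point.

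The principal obstacle — and, one suspects, the reason the authors leave this as a conjecture — is that the entire problem lives exactly on the heavy-tailed boundary. The function $\psi$ satisfies $\psi(\alpha^{-1})=\infty$ and, crucially, $\psi(p/\alpha)=(\alpha-1)/(p-1)>1$ for every $p\in(1,\alpha)$ (with $\psi(1)=1$), so the expected offspring sum $\mathbf{E}\sum_i(\Delta_i^{1/\alpha})^p$ exceeds $1$ throughout the range of fractional moments that could detect an $\alpha$-stable limit. Consequently the recursive second-moment machinery of Sections \ref{spectralsecas}--\ref{secondordersec} — which closes only because $\psi(r\theta\gamma^{-1})<1$ for the exponents used there — does not transfer: a naive $L^p$-iteration of the displayed recursion with $p<\alpha$ diverges, the limit being genuinely infinite-variance and hence invisible to any $L^2$ argument. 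One is therefore forced into the regime of critical (boundary-case) smoothing transforms, where the correct tools are spine/change-of-measure decompositions or the functional-equation analysis cited above, and the delicate point becomes the precise tail asymptotics of $\zeta(t)$ under the Poisson--Dirichlet law, which in turn requires a careful balance between the $\asymp e^{\gamma t/\alpha}$ ``large'' fragments ($\Delta_i\gtrsim e^{-\gamma t}$) and the ``small'' fragments, whose total mass is $\asymp e^{-\beta t}$. Finally, even granting the in-distribution statement, promoting it to an almost-sure second-order result would additionally require the higher fractional-moment bounds $\mathbf{E}|Y(t)|^k\leq Ce^{-k(\beta-\varepsilon)t}$, $k\in\mathbb{N}$, which, as noted at the end of Section \ref{secondordersec}, the authors regard as beyond the present methods.
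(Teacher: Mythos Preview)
The statement you are addressing is labelled as a \emph{Conjecture} in the paper, and the paper offers no proof of it whatsoever --- only the single motivating sentence ``by analogy with the results known to hold for related branching processes''. There is therefore no paper proof to compare your proposal against.

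Your write-up is not a proof either, and you are evidently aware of this: what you have produced is a heuristic derivation of the fixed-point equation $W\stackrel{d}{=}\Xi+\sum_i\Delta_i^{1/\alpha}W_i$ together with a candid list of obstacles. The structural observations are correct --- the rescaling $\tilde Y(t)=e^{\beta t}Y(t)$, the identity $1-\beta/\gamma=\alpha^{-1}$, the fact that $\sum_i(\Delta_i^{1/\alpha})^\alpha=1$ so that $\alpha$ is the characteristic exponent of the associated smoothing transform, and the computation $\psi(p/\alpha)=(\alpha-1)/(p-1)>1$ for $p\in(1,\alpha)$ showing that no $L^p$ iteration with $p<\alpha$ can close --- and these constitute exactly the ``analogy with related branching processes'' that the authors allude to but do not spell out. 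In that sense your outline goes well beyond what the paper contains.

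That said, you should be clear that none of your four stages is carried out: tightness of $(\tilde Y(t))_{t\geq0}$ is asserted as a goal but not established (and, as you yourself note, Lemma~\ref{ylem} gives only $\mathbf{E}\tilde Y(t)^2\leq Ce^{\varepsilon t}$, which is useless here); the convergence and form of the toll limit $\Xi=\lim_{t\to\infty}e^{\beta t}\zeta(t)$ is not shown; and the identification step appeals to fixed-point theory for the smoothing transform without verifying the relevant hypotheses in the present inhomogeneous, infinite-activity setting. Your final sentence also slightly conflates two different targets: the higher-moment bounds $\mathbf{E}|Y(t)|^k\leq Ce^{-k(\beta-\varepsilon)t}$ discussed at the end of Section~\ref{secondordersec} are aimed at an almost-sure \emph{upper bound} of order $e^{-(\beta-\varepsilon)t}$, not at the distributional convergence of the conjecture. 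In summary, your proposal is a well-informed and mathematically sound \emph{plan of attack} that correctly isolates why the problem is hard, but it is not --- and the paper does not pretend to have --- a proof.
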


\appendix

\section{Appendix}

The following result, which is a straightforward extension of \cite{PY}, equation (6), is applied in the proof of Lemma \ref{ylem}.

\begin{lem} Suppose $(V_i)_{i\in\mathbb{N}}$ has the Poisson-Dirichlet $(\alpha,\theta)$ distribution. For measurable functions $f,g$ we have
\begin{eqnarray*}
\lefteqn{\be \sum_{i=1}^{\infty} \sum_{j=1,j\neq i}^{\infty} f(V_i)g(V_j)}\\
& =& C_{\alpha,\theta}
\int_0^1 \int_0^1 f(x) g((1-x)y) x^{-1-\alpha}
 (1-x)^{\theta+\alpha-1}y^{-1-\alpha}(1-y)^{\theta+2\alpha-1} dxdy,
 \end{eqnarray*}
where
\[ C_{\alpha,\theta} = \frac{\Gamma(\theta+1)\Gamma(\theta+\alpha+1) }{\Gamma(1-\alpha)^2\Gamma(\theta+\alpha)
\Gamma(\theta+2\alpha)}. \]
\end{lem}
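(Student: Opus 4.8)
The plan is to derive this two-atom mean-measure identity from the one-atom formula of \cite{PY}, equation~(6), applied twice, combined with the reduced Palm (``deletion'') description of the Poisson--Dirichlet laws. Recall that \cite{PY}, equation~(6), gives, for a $\mathrm{PD}(\alpha,\theta)$ sequence $(V_i)_{i\in\mathbb{N}}$ and any measurable $h:[0,1]\to[0,\infty]$,
\[
\mathbf{E}\sum_{i\in\mathbb{N}}h(V_i)=\frac{\Gamma(\theta+1)}{\Gamma(\theta+\alpha)\Gamma(1-\alpha)}\int_0^1 h(v)\,v^{-1-\alpha}(1-v)^{\theta+\alpha-1}\,dv,
\]
and that, for a tagged atom of size $v$, the remaining atoms divided by $1-v$ again form a Poisson--Dirichlet sequence, now with parameters $(\alpha,\theta+\alpha)$; both facts are contained in \cite{PY}.

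Granting this, the argument is short. Fix nonnegative measurable $f,g$ and, for $v\in(0,1)$, set $\Phi(v):=\mathbf{E}(\sum_{j\neq i}g(V_j)\mid V_i=v)$, understood via the reduced Palm kernel at $v$. By the deletion property the latter is the law of $((1-v)W_k)_{k}$ with $(W_k)\sim\mathrm{PD}(\alpha,\theta+\alpha)$, so applying equation~(6) with $\theta$ replaced by $\theta+\alpha$ and with $h(\cdot)=g((1-v)\,\cdot)$ yields
\[
\Phi(v)=\frac{\Gamma(\theta+\alpha+1)}{\Gamma(\theta+2\alpha)\Gamma(1-\alpha)}\int_0^1 g((1-v)y)\,y^{-1-\alpha}(1-y)^{\theta+2\alpha-1}\,dy.
\]
A second application of equation~(6), this time to the function $h(v)=f(v)\Phi(v)$, gives
\[
\mathbf{E}\sum_{i\neq j}f(V_i)g(V_j)=\frac{\Gamma(\theta+1)}{\Gamma(\theta+\alpha)\Gamma(1-\alpha)}\int_0^1 f(v)\,\Phi(v)\,v^{-1-\alpha}(1-v)^{\theta+\alpha-1}\,dv,
\]
and substituting the expression for $\Phi$ reproduces precisely the double integral in the statement, the constant being the product
\[
\frac{\Gamma(\theta+1)}{\Gamma(\theta+\alpha)\Gamma(1-\alpha)}\cdot\frac{\Gamma(\theta+\alpha+1)}{\Gamma(\theta+2\alpha)\Gamma(1-\alpha)}=C_{\alpha,\theta}.
\]
For nonnegative $f,g$ all interchanges of sum, expectation and integral are legitimate by Tonelli's theorem, and the general measurable case follows by the usual decomposition into positive and negative parts.

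There is no genuine analytic difficulty here; the one point that must be got right is the reduced Palm description of $\mathrm{PD}(\alpha,\theta)$ --- in particular the shift $\theta\mapsto\theta+\alpha$ on deleting an atom and the rescaling by $(1-v)^{-1}$, which is exactly what makes the powers of $1-v$ combine to give back the stated kernel with prefactor $C_{\alpha,\theta}$. If one prefers a self-contained derivation not quoting the Palm structure, one can instead realise $(V_i)_{i\in\mathbb{N}}$ as the ranked normalised jumps over $[0,1]$ of a stable$(\alpha)$ subordinator $\sigma$, tilted by $\sigma_1^{-\theta}$, and invoke the bivariate Mecke (multivariate Campbell) formula for the Poisson process of jump sizes, whose intensity is a constant multiple of $x^{-1-\alpha}\,dx$: inserting two extra jumps $x,y$ and then changing variables $(x,y,\sigma_1)\mapsto(u,w,s)$ with $u=x/s$, $w=y/s$, $s=x+y+\sigma_1$ (Jacobian $s^2$), the $s$-integral collapses to the negative moment $\mathbf{E}\sigma_1^{-(\theta+2\alpha)}=\Gamma(2+\theta/\alpha)/(\alpha\Gamma(\theta+2\alpha))$, and assembling the Gamma factors (using $\mathbf{E}\sigma_1^{-\theta}=\Gamma(\theta/\alpha)/(\alpha\Gamma(\theta))$ for the normalising constant of the tilt) gives the same answer. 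I would present the first route, as it is literally the ``straightforward extension of \cite{PY}, equation~(6)'' advertised in the statement.
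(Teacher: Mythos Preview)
Your argument is correct and is essentially the same as the paper's, but packaged via Palm calculus rather than size-biased sampling. The paper takes two size-biased picks $\tilde V_1,\tilde V_2$ and uses the stick-breaking representation (\cite{PY}, Proposition~2) $\tilde V_1=\tilde Y_1$, $\tilde V_2=(1-\tilde Y_1)\tilde Y_2$ with $\tilde Y_i$ independent $\mathrm{Beta}(1-\alpha,\theta+i\alpha)$; the identity
\[
\be\sum_{i\neq j}f(V_i)g(V_j)=\be\left[\frac{f(\tilde V_1)}{\tilde V_1}\,\frac{g(\tilde V_2)(1-\tilde V_1)}{\tilde V_2}\right]
\]
is then evaluated directly as the stated double Beta integral. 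Your ``deletion plus two applications of (6)'' is the Palm reformulation of exactly this computation: the intensity from (6) plays the role of the law of $\tilde Y_1$, and the deletion property $\theta\mapsto\theta+\alpha$ is precisely what the independence of $\tilde Y_2\sim\mathrm{Beta}(1-\alpha,\theta+2\alpha)$ from $\tilde Y_1$ encodes. The paper's route is marginally more self-contained since it quotes a single concrete result rather than the Palm disintegration, but there is no substantive difference. Your alternative sketch via the bivariate Mecke formula for the subordinator jumps is a genuinely distinct derivation, though it is not the one the paper uses.
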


\begin{proof} This is an application of size-biased sampling. Following the set up in \cite{PY}, define $\tv_1$ to be a size biased pick
from $(V_i)_{i\in\mathbb{N}}$, that is
\[ P(\tv_1=V_n|\{V_i\}) = V_n, \;\;n\in\bn. \]
Also, let $\tv_2$ be the second size biased pick, that is a random variable with distribution
\[ P(\tv_2=V_n|\tv_1,\{V_i\}) = \frac{V_n\mathbf{1}_{\{V_n\neq\tv_1\}}}{1-\tv_1}, \;\;n\in\bn. \]
It is then possible to show that we can write
\[ \tv_1 = \ty_1,\;\; \tv_2=(1-\ty_1)\ty_2, \]
where $\ty_i$, $i=1,2$, are independent random variables with Beta($1-\alpha,\theta+i\alpha$) distribution (see \cite{PY}, Proposition 2, for example).
Applying this result,
\begin{eqnarray*}
&& \be \sum_{i=1}^{\infty} \sum_{j=1,j\neq i}^{\infty} f(V_i)g(V_j) \\
&& \qquad = \be\frac{f(\tv_1)}{\tv_1}
\frac{g(\tv_2)(1-\tv_1)}{\tv_2} \\
&& \qquad = \be\frac{f(\ty_1)}{\ty_1} \frac{g((1-\ty_1)\ty_2)}{\ty_2} \\
&& \qquad = \frac{\Gamma(\theta+1)\Gamma(\theta+\alpha+1) }{\Gamma(1-\alpha)^2\Gamma(\theta+\alpha)\Gamma(\theta+2\alpha)}\\
&&\qquad \qquad \times\int_0^1\int_0^1 f(x)g((1-x)y) x^{-1-\alpha}y^{-1-\alpha}(1-x)^{\theta+\alpha-1}(1-y)^{\theta+2\alpha-1}dxdy,
\end{eqnarray*}
as required.
\end{proof}

To apply this in our setting, we use $f(x) = g(x)=x^{\alpha^{-1}+\varepsilon\gamma^{-1}}$ with  Poisson-Dirichlet parameters $(\alpha^{-1},1-\alpha^{-1})$.
\def\cprime{$'$}
\providecommand{\bysame}{\leavevmode\hbox to3em{\hrulefill}\thinspace}
\providecommand{\MR}{\relax\ifhmode\unskip\space\fi MR }
\providecommand{\MRhref}[2]{%
  \href{http://www.ams.org/mathscinet-getitem?mr=#1}{#2}
}
\providecommand{\href}[2]{#2}


\begin{thebibliography}{10}

\bibitem{Aldous3}
D.~Aldous, \emph{The continuum random tree. {III}}, Ann. Probab. \textbf{21}
  (1993), no.~1, 248--289.

\bibitem{Aldous5}
\bysame, \emph{Recursive self-similarity for random trees, random
  triangulations and {B}rownian excursion}, Ann. Probab. \textbf{22} (1994),
  no.~2, 527--545.

\bibitem{DAC}
D.~A. Croydon, \emph{Scaling limits for simple random walks on random ordered
  graph trees}, Preprint.

\bibitem{hamcroy}
D.~A. Croydon and B.~M. Hambly, \emph{Self-similarity and spectral asymptotics
  for the continuum random tree}, Stochastic Proc. Appl. \textbf{118} (2008),
  730--754.

\bibitem{CK}
D.~A. Croydon and T.~Kumagai, \emph{Random walks on {G}alton-{W}atson trees
  with infinite variance offspring distribution conditioned to survive},
  Electron. J. Probab. \textbf{13} (2008), no. 51, 1419--1441.

\bibitem{Duqap}
T.~Duquesne, \emph{A limit theorem for the contour process of conditioned
  {G}alton-{W}atson trees}, Ann. Probab. \textbf{31} (2003), no.~2, 996--1027.

\bibitem{LeGallDuquesne2}
T.~Duquesne and J.-F. Le~Gall, \emph{Random trees, {L}\'evy processes and
  spatial branching processes}, Ast\'erisque (2002), no.~281, vi+147.

\bibitem{LegallDuquesne}
\bysame, \emph{Probabilistic and fractal aspects of {L}\'evy trees}, Probab.
  Theory Related Fields \textbf{131} (2005), no.~4, 553--603.

\bibitem{dlegh}
\bysame, \emph{The {H}ausdorff measure of stable trees}, ALEA Lat. Am. J.
  Probab. Math. Stat. \textbf{1} (2006), 393--415 (electronic).

\bibitem{FOT}
M.~Fukushima, Y.~{\=O}shima, and M.~Takeda, \emph{Dirichlet forms and symmetric
  {M}arkov processes}, de Gruyter Studies in Mathematics, vol.~19, Walter de
  Gruyter \& Co., Berlin, 1994.

\bibitem{Haas}
B.~Haas, \emph{Loss of mass in deterministic and random fragmentations},
  Stochastic Process. Appl. \textbf{106} (2003), no.~2, 245--277.

\bibitem{hmg}
B.~Haas and G.~Miermont, \emph{The genealogy of self-similar fragmentations
  with negative index as a continuum random tree}, Electron. J. Probab.
  \textbf{9} (2004), no. 4, 57--97 (electronic).

\bibitem{HPW}
B.~Haas, J.~Pitman, and M.~Winkel, \emph{Spinal partitions and invariance under
  re-rooting of continuum random trees}, Ann. Probab. \textbf{37} (2009),
  no.~4, 1381--1411.

\bibitem{Hamasymp}
B.~M. Hambly, \emph{On the asymptotics of the eigenvalue counting function for
  random recursive {S}ierpinski gaskets}, Probab. Theory Related Fields
  \textbf{117} (2000), no.~2, 221--247.

\bibitem{Ivrii}
V.~Y. Ivrii, \emph{The second term of the spectral asymptotics for a Laplace-Beltrami operator on manifolds with boundary},
Functional Anal. Appl. \textbf{14} (1980), no. 2, 98--106

\bibitem{Karlin}
S.~Karlin, \emph{On the renewal equation}, Pacific J. Math. \textbf{5} (1955),
  229--257.

\bibitem{Kigres}
J.~Kigami, \emph{Resistance forms, quasisymmetric maps and heat kernel
  estimates}, Preprint.

\bibitem{Kigamidendrite}
\bysame, \emph{Harmonic calculus on limits of networks and its application to
  dendrites}, J. Funct. Anal. \textbf{128} (1995), no.~1, 48--86.

\bibitem{Kigami}
\bysame, \emph{Analysis on fractals}, Cambridge Tracts in Mathematics, vol.
  143, Cambridge University Press, Cambridge, 2001.

\bibitem{KigLap}
J.~Kigami and M.~L. Lapidus, \emph{Weyl's problem for the spectral distribution
  of {L}aplacians on p.c.f.\ self-similar fractals}, Comm. Math. Phys.
  \textbf{158} (1993), no.~1, 93--125.

\bibitem{rrt}
J.-F. Le~Gall, \emph{Random real trees}, Ann. Fac. Sci. Toulouse Math. (6)
  \textbf{15} (2006), no.~1, 35--62.

\bibitem{Mier}
G.~Miermont, \emph{Self-similar fragmentations derived from the stable tree.
  {I}. {S}plitting at heights}, Probab. Theory Related Fields \textbf{127}
  (2003), no.~3, 423--454.

\bibitem{Mier1}
\bysame, \emph{Self-similar fragmentations derived from the stable tree. {II}.
  {S}plitting at nodes}, Probab. Theory Related Fields \textbf{131} (2005),
  no.~3, 341--375.

\bibitem{Nerman}
O.~Nerman, \emph{On the convergence of supercritical general ({C}-{M}-{J})
  branching processes}, Z. Wahrsch. Verw. Gebiete \textbf{57} (1981), no.~3,
  365--395.

\bibitem{PY}
J.~Pitman and M.~Yor, \emph{The two-parameter {P}oisson-{D}irichlet
  distribution derived from a stable subordinator}, Ann. Probab. \textbf{25}
  (1997), no.~2, 855--900.

\end{thebibliography}
\end{document}